\newcommand{\downstairs}{\bar}
\newcommand{\proj}{\mathrm{r}}
\newcommand{\sect}{\mathrm{s}}
\newcommand{\iotap}{\tau}
\newcommand{\classR}{\mathscr{R}_\Gamma}
\newcommand{\classT}{\mathscr{T}_\Gamma}
\newcommand{\classH}{\mathscr{H}_\Gamma}
\newcommand{\trivgp}{{\mathbf{1}}}
\newcommand{\classRo}{\mathscr{R}_\trivgp}
\newcommand{\classTo}{\mathscr{T}_\trivgp}
\newcommand{\loccit}{\emph{loc.\ cit.}\xspace}
\newcommand{\bX}{X}
\newcommand{\conn}{^\circ}
\newcommand{\sep}{^{\mathrm{sep}}}
\DeclareMathOperator{\Image}{Im}
\newcommand{\Q}{\mathbb{Q}}
\DeclareMathOperator{\Int}{Int}
\DeclareMathOperator{\Aut}{Aut}
\DeclareMathOperator{\Inn}{Inn}
\DeclareMathOperator{\Gal}{Gal}
\DeclareMathOperator{\Lie}{Lie}
\DeclareMathOperator{\chr}{char}
\newcommand{\inv}{^{-1}}
\newcommand{\lsup}[1]{{}^{#1}}
\newcommand{\maaap}[4]{\ensuremath{{#2}\colon{#3}#1{#4}}}
\newcommand{\map}{\maaap\longrightarrow}
\newcommand{\abmap}[2]{\ensuremath{{#1}\longrightarrow{#2}}}
\newtheorem{thm}[equation]{Theorem}  
\newtheorem{lem}[equation]{Lemma} 
\newtheorem{cor}[equation]{Corollary} 
\newtheorem{notation}[equation]{Notation} 
\theoremstyle{definition} 
\theoremstyle{remark} 
\newtheorem{rem}[equation]{Remark}
\title{Root data with group actions}
\date{\today}
\author{Jeffrey D.~Adler}
\address{Department of Mathematics and Statistics\\
American University\\
Washington, DC 20016-8050}
\email[Adler]{jadler@american.edu}
\author{Joshua M.~Lansky} 
\email[Lansky]{lansky@american.edu}
\subjclass[2010]{Primary 20G15, 20G40.  Secondary 20C33.}
\keywords{Reductive algebraic group, root datum, quasi-semisimple automorphisms}
\begin{document}

\begin{abstract}
Suppose $k$ is a field, $G$ is a connected reductive algebraic
$k$-group, $T$ is a maximal $k$-torus in $G$,
and $\Gamma$ is a finite group that acts on $(G,T)$.
From the above,
one obtains a root datum $\Psi$
on which $\Gal(k)\times\Gamma$ acts.
Provided that $\Gamma$ preserves a positive system in $\Psi$, not necessarily
invariant under $\Gal(k)$,
we construct an inverse to this process.
That is, given a root datum on which $\Gal(k)\times\Gamma$ acts appropriately,
we show how to construct a pair $(G,T)$, on which $\Gamma$ acts as above.

Although the pair $(G,T)$ and the action of $\Gamma$ are canonical only up to an
equivalence relation,
we construct a particular pair for which $G$ is $k$-quasisplit and
$\Gamma$ fixes a $\Gal(k)$-stable pinning of $G$.
Using these choices,
we can define a notion of taking ``$\Gamma$-fixed points''
at the level of equivalence classes,
and this process is compatible with
a general ``restriction'' process
for root data with $\Gamma$-action.
\end{abstract}

\maketitle

\section{Introduction}
\label{sec:intro}
Let $k$ be a field with separable closure $k\sep$.
Let $\Gamma$ be a finite group.

Suppose $\Psi$ is a (reduced) based root datum on which the absolute Galois
group $\Gal(k)$ acts.
Then it is well known 
(\cite{springer:lag-article}*{Theorem 6.2.7})
that there exists a connected, reductive, $k$-quasisplit $k$-group $G$,
uniquely determined up to $k$-isomorphism,
such that the root datum of $G$ (with respect to a maximal $k$-torus contained
in a Borel $k$-subgroup) is isomorphic to $\Psi$ and carries the same
action of $\Gal(k)$.
We generalize this result in two directions.
\begin{enumerate}[(A)]
\item
Suppose $G$ is a connected reductive $k$-group, and $T$ is an arbitrary maximal
torus.
Then the \emph{non-based} root datum $\Psi(G,T)$ carries an action of $\Gal(k)$.
We show that one can reverse this process.
That is, given a root datum $\Psi$ with an action of $\Gal(k)$, one
can obtain a pair $(G,T)$ that gives rise to $\Psi$.
In general, the pair $(G,T)$ need not be uniquely determined
up to $k$-isomorphism.
However, we can always choose $G$ to be $k$-quasisplit,
and all possibilities for $G$ must be $k$-inner forms of each other.
\item
Now suppose that $\Gamma$ acts 
on a pair $(G,T)$ as above via $k$-automorphisms.
Then $\Gamma$ acts on the root datum $\Psi(G,T)$, and the actions
of $\Gamma$ and $\Gal(k)$ commute.
We show that one can reverse this process
under mild conditions.
That is,
suppose that $\Psi$ is a root datum with an action of $\Gal(k)\times \Gamma$.
Assume that $\Gamma$ (but not necessarily $\Gal(k)$) preserves a base.
Then one can obtain a pair $(G,T)$ as above, carrying an action of $\Gamma$
via $k$-automorphisms.
That is, under appropriate conditions, we can lift an action of $\Gamma$
from a root datum to a reductive group.
Moreover, one can choose $G$ to be $k$-quasisplit, and can choose the action
of $\Gamma$ to preserve a pinning.
\end{enumerate}

The above are all contained in our main result,
Theorem \ref{thm:root-data-to-group}.
In order to state it more precisely,
let us consider the collection of abstract root data $\Psi$ that carry an action of
$\Gal(k) \times \Gamma$ such that
the action of $\Gamma$ stabilizes a base for $\Psi$.
We consider two data $\Psi$ and $\Psi'$ with such actions to be
\emph{equivalent}
if there is a $\Gal(k)\times\Gamma$-equivariant
isomorphism $\abmap{\Psi}{\Psi'}$.
Let $\classR$ denote the set of equivalence classes of reduced data
with such actions.

Let $G$ be a connected reductive $k$-group and $T\subseteq G$ a maximal $k$-torus.
Suppose there exists some Borel subgroup $B\subseteq G$ (not necessarily defined over $k$)
containing $T$,
and a homomorphism $\varphi$ from $\Gamma$ to the group $\Aut_k(G,B,T)$ of $k$-automorphisms of
$G$ stabilizing $T$ and $B$. Suppose $G'$, $T'$, and
$\varphi'$ are defined similarly.
We say that the triples $(G,T,\varphi)$ and $(G',T',\varphi')$
are \emph{equivalent}
if there exists an isomorphism $\map{\nu}{G}{G'}$
whose restriction
gives a $\Gamma$-equivariant $k$-isomorphism $T\longrightarrow T'$.
(In this situation, $\nu$ must be an inner twisting by~\cite{springer:corvallis}*{\S3.2}.)
Let $\classT$ be the set of equivalence classes of such triples $(G,T,\varphi)$.

A triple $(G,T,\varphi)$ as above naturally determines a root datum with appropriate actions of
$\Gal(k)$ and $\Gamma$, hence an element of $\classR$.
It is easily seen that if
$(G',T',\varphi')$ and $(G,T,\varphi)$ are equivalent, then
they determine the same class in $\classR$.
Hence we have a natural
map $\map{\proj_\Gamma}{\classT}{\classR}$.

Our main result is the following:
\begin{thm}
\label{thm:root-data-to-group}
The map $\map{\proj_\Gamma}{\classT}{\classR}$ is a bijection.
\end{thm}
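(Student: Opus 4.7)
The plan is to prove injectivity and surjectivity of $\proj_\Gamma$ separately. For injectivity, suppose $(G, T, \varphi)$ and $(G', T', \varphi')$ map to the same class in $\classR$ via a $\Gal(k) \times \Gamma$-equivariant isomorphism $\map{f}{\Psi(G, T)}{\Psi(G', T')}$. The classical isomorphism theorem for reductive groups over $k\sep$ lifts $f$ to a $k\sep$-isomorphism $\map{\nu}{G}{G'}$ inducing $f$ on root data, so in particular $\nu$ restricts to an isomorphism $\abmap{T}{T'}$. The $\Gal(k)$-equivariance of $f$ forces $\nu|_T$ to descend to $k$, and its $\Gamma$-equivariance makes $\nu|_T$ intertwine $\varphi|_T$ and $\varphi'|_{T'}$. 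The result of Springer cited after the definition of $\classT$ ensures $\nu$ is automatically an inner twisting, so $(G, T, \varphi)$ and $(G', T', \varphi')$ are equivalent in $\classT$.

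For surjectivity, fix $\Psi \in \classR$ and a $\Gamma$-stable base $\Delta \subseteq \Psi$. Since $\Gamma$ and $\Gal(k)$ commute, each $\sigma(\Delta)$ is again $\Gamma$-stable, so there is a unique $w_\sigma$ in the Weyl group $W$ of $\Psi$ with $\sigma(\Delta) = w_\sigma(\Delta)$, and the same commutativity argument forces $w_\sigma \in W^\Gamma$. The identity $\sigma\tau(\Delta) = \sigma(w_\tau)\, w_\sigma \cdot \Delta$ yields the cocycle relation $w_{\sigma\tau} = \sigma(w_\tau)\, w_\sigma$, from which a direct calculation shows that $\sigma \mapsto \sigma^{\ast} := w_\sigma\inv \sigma$ defines a new action of $\Gal(k)$ on $\Psi$. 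This twisted action preserves $\Delta$ and, because $w_\sigma \in W^\Gamma$, continues to commute with the $\Gamma$-action.

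Applying Springer's theorem to $(\Psi, \Delta)$ equipped with $\sigma^{\ast}$ yields a $k$-quasisplit connected reductive group $G^\ast$ with a $\Gal(k)$-stable pinning $(T_0, B_0, \{X_\alpha\})$ realizing the based root datum $(\Psi, \Delta)$, and the $\Gamma$-action on $(\Psi, \Delta)$ lifts uniquely to a pinning-preserving $k$-action $\varphi_0$ of $\Gamma$ on $G^\ast$. The triple $(G^\ast, T_0, \varphi_0)$ lies in $\classT$ but realizes the twisted action $\sigma^{\ast}$, not $\sigma$ itself. To recover $\sigma$, the plan is to lift $(w_\sigma)$ to a cocycle $(n_\sigma)$ in $N_{G^\ast}(T_0)(k\sep)$ and take $G$ to be $G^\ast$ twisted by $(n_\sigma)$: as a $k\sep$-group $G = G^\ast$, but the new Galois action is $\sigma \cdot x := n_\sigma\, \sigma^{\ast}(x)\, n_\sigma\inv$. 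By construction $T_0$ is still a maximal $k$-torus in $G$, the natural Galois action on $\Psi(G, T_0)$ becomes $\sigma$, and $B_0$ remains a (not necessarily $k$-rational) Borel of $G$ containing $T_0$.

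The main obstacle is to choose $(n_\sigma)$ compatibly with $\varphi_0$, so that $\varphi_0$ remains a $k$-action after twisting: one needs $\Int(n_\sigma)$ to commute with each $\varphi_0(\gamma)$ modulo the center, which in turn reduces to lifting $w_\sigma \in W^\Gamma$ to a $\Gamma$-fixed element of $N_{G^\ast}(T_0) / Z(G^\ast)$. Since $w_\sigma \in W^\Gamma$, each coset $n_\sigma T_0$ is $\Gamma$-stable, so the obstruction to a $\Gamma$-equivariant choice of $(n_\sigma)$ lives in a cohomology group of $T_0 / Z(G^\ast)$, and the hard part is exhibiting a lift for which this obstruction vanishes (or can be absorbed into the central ambiguity of the pinning). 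Once the correct lift is in hand, $(G, T_0, \varphi_0)$ lies in $\classT$ and maps under $\proj_\Gamma$ to the given class in $\classR$, completing the proof.
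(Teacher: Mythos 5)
Your overall framework is sound and parallels the paper's: pass to the based datum $\dot\Psi$ with the twisted action $\sigma^\star = w_\sigma^{-1}\sigma$, invoke Springer's existence theorem for the quasisplit group, lift the $\Gamma$-action by a pinning, then account for the discrepancy between $\sigma$ and $\sigma^\star$ by a Weyl-group cocycle. Your injectivity argument is also essentially the paper's (it uses Springer's isomorphism theorem, \cite{springer:corvallis}*{Theorem 2.9}, to promote a $\Gal(k)\times\Gamma$-equivariant isomorphism of root data to a $\nu\colon G\to G'$ restricting to a $\Gamma$-equivariant $k$-isomorphism $T\to T'$). The stylistic difference in your surjectivity argument — twisting the Galois action on $G^\ast$ by $(n_\sigma)$ rather than conjugating $T_0$ to a new torus $T={}^{g}T_0$ inside a fixed quasisplit $G$ — is immaterial; these are two descriptions of the same inner twist.

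The genuine gap is precisely what you flag as ``the main obstacle'' and then do not resolve: producing a $\Gamma$-compatible cocycle lift $(n_\sigma)$ of $(w_\sigma)$. There are actually two intertwined issues, neither of which is addressed. First, the exact sequence $1\to T_0\to N_{G^\ast}(T_0)\to W\to 1$ need not split in Galois cohomology, so a set-theoretic lift of $(w_\sigma)$ need not satisfy the cocycle condition, and you offer no argument that a \emph{cocycle}-valued lift exists at all. Second, even granting a cocycle lift, you still need it to be compatible with $\varphi_0$, i.e.\ $n_\sigma^{-1}\varphi_0(\gamma)(n_\sigma)$ central. The paper kills both problems at once and this is the real content of the theorem: Lemma~\ref{lem:weyl-fixed-pinning} (which itself rests on the whole restriction apparatus of \S\ref{sec:root-data}, in particular Theorem~\ref{thm:restriction} and Corollary~\ref{cor:weyl-isomorphism}) identifies $W(\dot\Psi)^\Gamma$ with $W(\downstairs G,\downstairs T_0)$ for the fixed-point group $\downstairs G = (G^{\varphi(\Gamma)})^\circ$, which is itself quasisplit. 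One then applies Raghunathan's theorem (\cite{raghunathan:tori}*{Theorem 1.1}) \emph{inside $\downstairs G$}: it produces $g\in\downstairs G(k^{\mathrm{sep}})$ with $n_\sigma := g^{-1}\sigma(g)\in N_{\downstairs G}(\downstairs T_0)(k^{\mathrm{sep}})$ lifting $c(\sigma)$. Because $n_\sigma$ is automatically a cocycle of coboundary form, and because $g$ and $n_\sigma$ lie in $\downstairs G$ and are therefore $\varphi(\Gamma)$-fixed, both the cocycle condition and the $\Gamma$-compatibility come for free. Without this idea — or some substitute for it — your surjectivity argument does not close. (You would also still need to check well-definedness on equivalence classes, or else, as you implicitly do, replace the construction of a two-sided inverse by a direct injectivity/surjectivity argument; either way the Raghunathan-plus-fixed-point-group step is the crux.)
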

In \S\ref{sec:cohomology}, we give cohomological
descriptions of $\classR$ and the inverse of the map $\proj_\Gamma$,
and thus obtain a cohomological description of $\classT$.

In~\cite{adler-lansky:lifting}, we introduce the notion of a \emph{parascopic group}
$\downstairs G$
for a pair $(G,\Gamma)$, where $\Gamma$ is a finite group acting via $k$-automorphisms
(in a certain specified way) on a maximal $k$-torus of the reductive $k$-group $G$.
This notion axiomatizes and generalizes the relationship between $G$ and the connected part
$(G^\Gamma)\conn$ of the group of $\Gamma$-fixed points in the case
where the action of $\Gamma$ extends to one on $G$. When $\downstairs G$
is parascopic for $(G,\Gamma)$, we construct a canonical lifting of semisimple
stable conjugacy classes from $\downstairs G^\wedge$ to $G^\wedge$, $k$-groups that
are in duality with $\downstairs G$ and $G$, respectively.
Theorem~\ref{thm:root-data-to-group} is a key tool in our further study of
this lifting in~\cite{adler-lansky:lifting2}. In particular,
in many settings, this result allows one to factor such a lifting into a composition of simpler ones
which are much more readily understood.

We also prove a generalization of a second well-known result.
Suppose $\Gamma$ is a finite
group acting algebraically on a connected reductive group $G$,
fixing a Borel-torus pair $(B,T)$ in $G$
and a pinning for $(G,B,T)$.
Then the root system of the connected part
$\downstairs G:= (G^\Gamma)\conn$
of the group of fixed points is obtained as follows.
The set of restrictions of roots of $G$ from $T$ to
$\downstairs T := (T^\Gamma)\conn$
is a root system, not necessarily reduced,
but there is a preferred way to choose a maximal reduced subsystem.
The above is well known,
but in 
Theorem~\ref{thm:restriction} and Lemma~\ref{lem:weyl-fixed-pinning}
we describe the root datum, not just the root system,
of $\downstairs G$ with respect to $\downstairs T$.

Theorem \ref{thm:compatibility}
says that the process of passing from $(G,T)$
to $(\downstairs G,\downstairs T)$
is compatible with the bijection of Theorem \ref{thm:root-data-to-group}.
To state this result more precisely,
suppose that the triple $(G,T,\varphi)$ represents an element of $\classT$.
Then we know \cite{adler-lansky:lifting}*{Proposition 3.5}
that
$\downstairs{G}$
is a connected reductive $k$-group,
and
$\downstairs{T}$ 
is a maximal $k$-torus in $\downstairs{G}$.
Thus, if we let ``$1$'' represent the map from the trivial
group $\trivgp$ to $\Aut(G)$,
then the triple $(\downstairs{G},\downstairs{T},1)$
represents an element of $\classTo$.
The equivalence class of $(G,T,\varphi)$ does not determine
that of
$(\downstairs{G},\downstairs{T},1)$,
or even the $k\sep$-isomorphism class of $\downstairs G$.
Nonetheless, we can obtain
a well-defined map $\abmap\classT\classTo$ as follows:
From Remark \ref{rem:fixed-pinning},
we will see that
every class in $\classT$
contains a triple $(G,T,\varphi)$ such $G$ is $k$-quasisplit
and $\varphi$ fixes a $\Gal(k)$-invariant pinning.
Use this choice of triple to define
$\downstairs G$ and $\downstairs T$,
and it is straightforward to show that our choices determine
$\downstairs G$ and $\downstairs T$ up to $k$-isomorphism.

Suppose that the root datum $\Psi$ represents an element of $\classR$.
We will see in \S\ref{sec:root-data} that the action of $\Gamma$
on $\Psi$ allows us to construct a ``restricted'' root datum
$\downstairs{\Psi}$ that has a preferred
choice
of reduced subdatum $\downstairs{\Psi}'$.
We thus obtain a map $\abmap\classR\classRo$.

\begin{thm}
\label{thm:compatibility}
Our maps
$\abmap{\classT}{\classTo}$
and
$\abmap{\classR}{\classRo}$
above
are compatible with the maps
of Theorem \ref{thm:root-data-to-group},
in the sense that the following
diagram commutes:
\begin{equation*}
\begin{xy}
\xymatrix{
\classT \ar[d] \ar[r]^{\proj_\Gamma} & \classR \ar[d]  \\
\classTo  \ar[r]^{\proj_\trivgp} & \classRo 
}
\end{xy}
\end{equation*}
\end{thm}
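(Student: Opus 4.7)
The plan is to evaluate both compositions of the diagram on a carefully chosen representative of a given class in $\classT$. By Remark~\ref{rem:fixed-pinning}, every class in $\classT$ admits a representative $(G,T,\varphi)$ with $G$ a $k$-quasisplit, $T$ contained in a Borel $k$-subgroup $B$, and $\varphi(\Gamma)$ fixing a $\Gal(k)$-invariant pinning of $(G,B,T)$. I would fix such a representative at the outset; since the left vertical map was defined using precisely this type of representative, this costs nothing.

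Going right then down, $\proj_\Gamma$ sends $(G,T,\varphi)$ to the class of the root datum $\Psi := \Psi(G,T)$ with its canonical commuting actions of $\Gal(k)$ and $\Gamma$, and the right vertical then produces the class of the preferred reduced subdatum $\downstairs\Psi'$ of the restricted datum $\downstairs\Psi$, equipped with its induced $\Gal(k)$-action. Going down then right, the left vertical produces $(\downstairs G,\downstairs T,1)$ with $\downstairs G=(G^\Gamma)\conn$, $\downstairs T=(T^\Gamma)\conn$, and then $\proj_\trivgp$ yields the class of $\Psi(\downstairs G,\downstairs T)$ with its $\Gal(k)$-action. The claim is that these two classes agree in $\classRo$.

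The central step is to identify $\Psi(\downstairs G,\downstairs T)$ with $\downstairs\Psi'$, and this is precisely the content of Theorem~\ref{thm:restriction} together with Lemma~\ref{lem:weyl-fixed-pinning}: those results describe the root datum of $\downstairs G$ with respect to $\downstairs T$ as the preferred reduced subdatum of the restriction of $\Psi(G,T)$ by the $\Gamma$-action. I would invoke this identification on the nose, noting that its input data (the pair $(B,T)$ and the $\Gamma$-action on $\Psi$) are exactly what our chosen representative provides.

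The main obstacle is ensuring that this identification is $\Gal(k)$-equivariant, so that the equality holds in $\classRo$ rather than merely between abstract root data. This should follow from two observations. First, the restriction map $\Psi\mapsto\downstairs\Psi'$ at the level of root data is constructed from $\Gamma$-orbit sums of characters and $\Gamma$-averages of cocharacters together with a preferred reduced subsystem; since the actions of $\Gal(k)$ and $\Gamma$ on $\Psi$ commute, each ingredient of this construction is $\Gal(k)$-equivariant. Second, because our chosen pinning is $\Gal(k)$-stable and $\varphi$ fixes it, the subgroups $\downstairs G$ and $\downstairs T$ are $\Gal(k)$-stable and the identifications $X^*(\downstairs T)\cong X^*(T)_\Gamma$ and so on are $\Gal(k)$-equivariant. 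Combining these observations with the identification from Theorem~\ref{thm:restriction} and Lemma~\ref{lem:weyl-fixed-pinning} produces a $\Gal(k)$-equivariant isomorphism of root data, giving commutativity. The work is essentially bookkeeping, once one has isolated the correct representative.
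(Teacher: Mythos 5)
Your proposal is correct and follows essentially the same route as the paper: pick the quasisplit representative with $\Gal(k)$-stable pinning via Remark~\ref{rem:fixed-pinning}, evaluate both compositions, and reduce the commutativity to the identification $\Psi(\downstairs G,\downstairs T)=\downstairs\Psi'$ of Lemma~\ref{lem:weyl-fixed-pinning}. Your explicit discussion of $\Gal(k)$-equivariance of that identification is a reasonable (and welcome) elaboration of a point the paper leaves implicit.
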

We prove both theorems in \S\ref{sec:proofs}.

Some of the results in this paper have recently appeared elsewhere,
and others are generalizations of existing results.
For example, as indicated above, \cite{springer:lag-article}
presents Statement (A)
in the case where $G$ is quasi-split over $k$ and $T$ is contained
in a Borel $k$-subgroup of $G$.
Corollary \ref{cor:weyl-isomorphism}
has appeared as \cite{haines:satake}*{Lemma 4.2(2)}.
Lemma \ref{lem:weyl-fixed-pinning} overlaps to some extent with
\cite{haines:satake}*{Proposition 4.1}, but
our proof is quite different.
Theorem \ref{thm:restriction}
is similar to \cite{haines:satake}*{Lemma 4.2(1)} and
\cite{haines:dualities}*{Theorem A}, except that we describe
the full restricted root datum, and our proof does
not rely on Steinberg \cite{steinberg:endomorphisms}.

\section{Restrictions of root data}
\label{sec:root-data}

Let $\Psi = (X^*,\Phi,X_*,\Phi^\vee)$
be a root datum.
(We do not assume that $\Phi$ is reduced.)
Let $\Gamma$ denote a finite group of automorphisms of $\Psi$.
We assume that there exists a $\Gamma$-stable set $\Delta$
of simple roots in $\Phi$.
Let $V^* = X^* \otimes \Q$ and $V_* = X_* \otimes \Q$.
Let $i^*$ denote the quotient map from $V^*$ to its space
$\downstairs V^* := V^*_\Gamma$ of $\Gamma$-coinvariants.
From \cite{adler-lansky:lifting}*{\S2},
there is an embedding $\map\iota{\downstairs V^*}{V^*}$ with image $V^{*\,\Gamma}$
given by
\[
\iota (\downstairs v) = \frac{1}{|\Gamma|} \sum_{\gamma\in\Gamma} \gamma v,
\]
where $v$ is any preimage in $V^*$ of $\downstairs v \in \downstairs V^*$.
Let $\downstairs X^*$ and $\downstairs \Phi$ denote the images
of $X^*$ and $\Phi$ under $i^*$.
Then $\downstairs X^*$ is $X^*_\Gamma$ modulo torsion,
where $X^*_\Gamma$ denotes the module
of $\Gamma$-coinvariants of $X^*$.
It is straightforward to see that 
$\downstairs X^*$ and $\downstairs X_*:= X_*^\Gamma$,
and thus
$\downstairs V^*$ and $\downstairs V_*:=V_*^\Gamma$,
are in duality via the pairing given by
$\langle \downstairs x, \downstairs\lambda\rangle :=
\langle \iota \downstairs x, i_*\downstairs\lambda\rangle$,
where $\map{i_*}{\downstairs X_*}{X_*}$
is the inclusion map.
With respect to these pairings, 
$i^*$ is the transpose of $i_*$.

For each $\beta\in\Phi$, let $w_\beta$ denote the automorphism of $X^*$ defined by
\[
w_\beta(x) = x-\langle x,\beta^\vee\rangle \beta.
\]
Let $W$ denote the Weyl group of $\Psi$,
i.e., the (finite) subgroup of $\Aut(X^*)$
generated by the $w_\beta$.
Then $\Gamma$ acts naturally on $W$, and $W$ acts on $X^*$.
The group $W^\Gamma$ of $\Gamma$-fixed elements of $W$ acts on
on both $\downstairs V^*$ and $\downstairs X^*$ via the rule
$w(i^* x) := i^*(w(x))$
for $w\in W^\Gamma$ and $x\in X^*$.
Equivalently, for $\downstairs x \in \downstairs X^*$ and $w\in W^\Gamma$,
we have $\iota (w(\downstairs x)) = w(\iota \downstairs x)$.

\begin{lem}[cf.~\cite{steinberg:endomorphisms}*{\S1.32(a)}]
\label{lem:fidelite}
The natural action of $W^\Gamma$ on $\downstairs X^*$ is faithful.
\end{lem}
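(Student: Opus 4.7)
The plan is to transport the $W^\Gamma$-action on $\downstairs X^*$ to the subspace $V^{*\Gamma}\subseteq V^*$ via $\iota$, and then invoke the classical fact that the only element of $W$ fixing a regular vector in $V^*$ is the identity. First I would observe that for $w\in W^\Gamma$ and $\downstairs x = i^*(x)\in\downstairs X^*$, the defining identity $\iota(w(\downstairs x))=w(\iota\downstairs x)$ recorded just before the lemma means exactly that, after transport via $\iota$, the $W^\Gamma$-action on $\downstairs X^*$ coincides with the restriction to $V^{*\Gamma}$ of the usual $W$-action on $V^*$. This is consistent because any $w\in W^\Gamma$ commutes with every $\gamma\in\Gamma$, hence with the averaging projection $\iota\circ i^*$. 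Since $\downstairs X^*$ spans $\downstairs V^*$, the lemma reduces to showing that any $w\in W^\Gamma$ fixing $V^{*\Gamma}$ pointwise must be trivial.

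Next I would produce a regular vector in $V^{*\Gamma}$. Let $C\subseteq V^*$ denote the open fundamental chamber attached to $\Delta$, i.e.,
\[
C=\{\,v\in V^*:\langle v,\alpha^\vee\rangle>0\text{ for all }\alpha\in\Delta\,\}.
\]
Because $\Gamma$ permutes $\Delta$ setwise, it also permutes the simple coroots $\{\alpha^\vee:\alpha\in\Delta\}$, so $\gamma C = C$ for every $\gamma\in\Gamma$. Pick any $v\in C$; then each $\gamma v\in C$, and the convexity of $C$ forces the average
\[
v_0 \;:=\; \iota(i^*v) \;=\; \frac{1}{|\Gamma|}\sum_{\gamma\in\Gamma}\gamma v
\]
to lie in $C\cap V^{*\Gamma}$, so $v_0$ is a regular vector that happens to lie in $V^{*\Gamma}$.

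Finally, suppose $w\in W^\Gamma$ fixes $V^{*\Gamma}$ pointwise. Then $wv_0=v_0$, and the standard theory of Weyl chambers---open chambers are fundamental domains for the $W$-action on the complement of the reflecting hyperplanes, so the stabilizer in $W$ of any regular vector is trivial---forces $w=1$. The essential use of the hypothesis that $\Gamma$ stabilizes $\Delta$ occurs in the averaging step: without $\Gamma$-stability of $C$ we would have no way to guarantee that $v_0$ is regular. Everything else is formal or an appeal to standard Bourbaki, so the only point I expect to watch carefully is the identification of the transported action in the first step.
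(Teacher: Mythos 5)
Your proof is correct, but it takes a genuinely different route from the paper's. The paper argues directly on roots: given a nontrivial $w\in W^\Gamma$, it picks a positive root $\beta$ with $w\beta$ negative, uses the $\Gamma$-stability of $\Delta$ to conclude that $w(\gamma\beta)$ is negative for every $\gamma\in\Gamma$, and then observes that $\iota(w(i^*\beta))$ is a nonpositive combination of simple roots while $\iota(i^*\beta)$ is a nonnegative one, so $w(i^*\beta)\neq i^*\beta$. You instead transport the $W^\Gamma$-action on $\downstairs X^*$ through $\iota$ to the $W$-action on $V^{*\Gamma}$, manufacture a $\Gamma$-invariant regular vector $v_0$ by averaging a point of the $\Delta$-chamber (which is $\Gamma$-stable precisely because $\Delta$ is), and appeal to the standard fact that a finite reflection group has trivial stabilizer at regular points. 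Both arguments hinge on the same hypothesis --- $\Gamma$-stability of $\Delta$ --- but deploy it differently: the paper uses it to propagate negativity along a $\Gamma$-orbit of roots, while you use it to keep the averaged vector inside the open chamber. Your version is arguably more conceptual and outsources the final step to Bourbaki's chamber theory; the paper's is more self-contained and stays entirely at the level of coefficient bookkeeping on roots. The one thing to keep explicit in your write-up is the passage from ``$w$ acts trivially on $\downstairs X^*$'' to ``$w$ fixes $V^{*\Gamma}$ pointwise'': this is immediate because $\iota(\downstairs X^*)$ spans $V^{*\Gamma}$ over $\Q$ and $w$ acts linearly, but it is worth a sentence.
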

\begin{proof}
Let $w$ be a nontrivial element of $W^\Gamma$.
Then there exists a positive root $\beta\in \Phi$
such that $w(\beta)$ is negative.
Since $\Gamma$ stabilizes $\Delta$, it follows that
$w(\gamma\cdot\beta) = \gamma\cdot (w \beta)$ is also negative for every $\gamma\in\Gamma$.
Thus $\iota (w(i^*\beta)) $ is a linear combination of roots in $\Delta$
in which all of the coefficients are nonpositive, so $w(i^*\beta)\neq i^*\beta$.
\end{proof}

\begin{notation}
\label{notation:Xi}
For each root $\beta \in \Phi$,
define a $\Gamma$-orbit $\Xi_\beta$ in $\Phi$
as in \cite{adler-lansky:lifting}*{\S5}.
That is,
let $\Xi_\beta = \Gamma \cdot \beta$
if this is an orthogonal set.
Otherwise,
for each $\theta\in\Gamma\cdot\beta$,
there exists a unique root
$\theta'\neq\theta$ in $\Gamma\cdot\beta$ such that $\theta$ and $\theta'$ are
not orthogonal.
Moreover, $\theta+\theta'$ is a root in $\Phi$ and does
not belong to $\Gamma\cdot\beta$.
In this case, let $\Xi_\beta = \{\theta+\theta'\mid\theta\in\Gamma\cdot\beta\}$.
\end{notation}

\begin{rem}
\label{rem:weyl-lift}
Thus, in all cases, $\Xi_\beta$ is an orthogonal $\Gamma$-orbit of roots.
\end{rem}

\begin{lem}
\label{lem:orbit}
If $\alpha\in\downstairs\Phi$,
then ${i^*}\inv (\alpha)$ is a $\Gamma$-orbit of roots in $\Phi$.
\end{lem}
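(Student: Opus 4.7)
My plan is to establish two things: that $(i^*)\inv(\alpha)\cap\Phi$ is $\Gamma$-stable, and that $\Gamma$ acts transitively on this set.

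$\Gamma$-stability is essentially immediate. By construction $\downstairs V^* = V^*_\Gamma$ is the space of coinvariants, so $\Gamma$ acts trivially on it, and $i^*$ is $\Gamma$-equivariant. Hence every fiber of $i^*$ (in particular the one over $\alpha$) is $\Gamma$-invariant, and so is its intersection with $\Phi$.

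For transitivity, take $\beta,\beta'\in\Phi$ with $i^*\beta=i^*\beta'=\alpha$. Applying $\iota$ converts this into the equality
\[
\iota(\alpha) \;=\; \frac{1}{|\Gamma|}\sum_{\gamma\in\Gamma}\gamma\beta \;=\; \frac{1}{|\Gamma|}\sum_{\gamma\in\Gamma}\gamma\beta'
\]
of $\Gamma$-averages in $V^{*\Gamma}\subseteq V^*$. The first reduction uses that $\Gamma$ stabilizes $\Delta$ and hence $\Phi^+$: in the basis $\Delta$, the coefficients of $\iota(\alpha)$ are all nonnegative if $\beta\in\Phi^+$ and all nonpositive if $\beta\in-\Phi^+$, so $\beta$ and $\beta'$ must have the same sign, and I may assume both lie in $\Phi^+$.

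The core of the argument is then to show that the averaging map $\Phi^+/\Gamma \to V^{*\Gamma}$ is injective. I plan to argue this by induction on the height $h(\beta) = \sum_{\delta \in \Delta} n_\delta$ (where $\beta = \sum n_\delta \delta$); $h$ is $\Gamma$-invariant since $\Gamma$ permutes $\Delta$, so $h(\beta) = h(\beta')$. The base case $h=1$ follows because, for $\delta\in\Delta$, the average $\iota(i^*\delta)$ is supported exactly on the $\Gamma$-orbit of $\delta$ in $\Delta$. For the inductive step with $h>1$, I would invoke the standard fact that some $\delta\in\Delta$ satisfies $\beta-\delta\in\Phi^+$, and produce a matching descent $\beta'-\delta'\in\Phi^+$ with $\delta'\in\Gamma\cdot\delta$ (so that $\iota(i^*(\beta-\delta)) = \iota(i^*(\beta'-\delta'))$); by induction, $\beta'-\delta' = \gamma(\beta-\delta)$ for some $\gamma\in\Gamma$, and arranging $\gamma\delta = \delta'$ yields $\beta' = \gamma\beta$.

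The principal obstacle lies in guaranteeing this simultaneous descent: given $\delta$ as above, the existence of a suitable $\delta'\in\Gamma\cdot\delta$ with $\beta' - \delta' \in \Phi^+$ is nontrivial. This likely requires a preliminary reduction to the case where $\beta,\beta'$ lie in a single $\Gamma$-orbit of irreducible components of $\Phi$ (which holds because the support of $\iota(\alpha)$ lies in the $\Gamma$-orbit of the component containing $\beta$), together with careful use of the fact, forced by $\iota(i^*\beta) = \iota(i^*\beta')$, that the coefficient sums $\sum_{\delta\in\Delta_i}n_\delta$ over each $\Gamma$-orbit $\Delta_i\subseteq\Delta$ agree for $\beta$ and $\beta'$.
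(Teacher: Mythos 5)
Your overall structure (stability plus transitivity, reduction to a common sign) is sound, and the observation that the $\Gamma$-orbit sums of the $\Delta$-coefficients of $\beta$ and $\beta'$ must agree is a correct and useful consequence of $\iota(i^*\beta)=\iota(i^*\beta')$. However, the argument has a genuine gap, and you have flagged it yourself: the ``simultaneous descent'' in the inductive step is asserted, not proved. You need to exhibit $\delta\in\Delta$ and $\delta'\in\Gamma\cdot\delta$ with both $\beta-\delta$ and $\beta'-\delta'$ in $\Phi^+$. The natural route --- take any $\delta$ with $\langle\beta,\delta^\vee\rangle>0$, note that $\sum_{\delta''\in\Gamma\cdot\delta}\langle\beta,\delta''^\vee\rangle=\sum_{\delta''\in\Gamma\cdot\delta}\langle\beta',\delta''^\vee\rangle$, and conclude some $\langle\beta',\delta''^\vee\rangle>0$ --- breaks down when that orbit sum is $\le 0$, which does happen (already in type $A_3$ with the diagram flip, $\beta=\delta_1+\delta_2$, $\delta=\delta_1$ gives orbit sum $0$). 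So a more careful choice of $\delta$ and additional combinatorial reductions are needed to close the induction, and these are precisely what you have not supplied. Until that step is justified, the proof is incomplete.

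The paper takes a different, non-inductive route that sidesteps the descent problem entirely. From $i^*(\beta'-\beta)=0$ and $\Gamma$-stability of $\Delta$, the $\Delta$-coefficients of $\beta'-\beta$ sum to zero, so $\beta'-\beta\notin\Phi$; hence $\langle\beta',\theta^\vee\rangle\le 0$ for every $\theta\in\Gamma\cdot\beta$ with $\theta\ne\beta'$. Pairing $\beta'-\beta$ against $\sum_{\theta\in\Gamma\cdot\beta}\theta^\vee$ (a $\Gamma$-invariant element, so the pairing factors through $i^*$ and vanishes) yields $\sum_{\theta}\langle\beta',\theta^\vee\rangle=\sum_{\theta}\langle\beta,\theta^\vee\rangle$, and the latter equals $2$ or $1$ depending on whether $\Gamma\cdot\beta$ is orthogonal; since this is positive and all terms with $\theta\ne\beta'$ are nonpositive, one of the terms must have $\theta=\beta'$, i.e.\ $\beta'\in\Gamma\cdot\beta$. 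That pairing argument replaces your entire induction in a few lines and makes no appeal to positivity or height, so you may want to compare the two; but as written, your inductive step is the missing core of the argument.
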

\begin{proof}
This argument is similar to but more general than that given in the proof of~\cite{springer:lag}*{Lemma 10.3.2(ii)}.
Suppose $\beta\in\Phi$ and $i^*(\beta) = \alpha$.
Then clearly $i^*\theta = \alpha$ for any
$\theta\in\Gamma\cdot\beta$.

Now suppose $\beta'\in \Phi$, $\beta'\neq \beta$, and $i^*\beta'=\alpha$.
Since $\iota(i^*(\beta'-\beta)) = 0$ and since $\Gamma$ preserves $\Delta$,
when $\beta'-\beta$ is written as a linear combination of simple roots,
the coefficients must
sum to $0$.
In particular, $\beta'-\beta\notin\Phi$. Since $\beta'$ cannot be a multiple of $\beta$, we have that 
$\langle \beta',\beta^\vee\rangle\leq 0$
by standard results about root systems.
Similarly, $\langle \beta',\theta^\vee\rangle\leq 0$ for all $\theta\neq\beta'$ in $\Gamma\cdot\beta$.
Therefore,
\[
\Bigl\langle \beta' - \beta, \sum_{\theta\in \Gamma\cdot \beta} \theta^\vee \Bigr\rangle
=
\Bigl\langle \beta' - \beta, i_*\bigl(\sum_{\theta\in \Gamma\cdot \beta} \theta^\vee\bigr) \Bigr\rangle
=
\Bigl\langle i^*(\beta' - \beta), \sum_{\theta\in \Gamma\cdot \beta} \theta^\vee \Bigr\rangle ,
\]
and since $i^*(\beta' - \beta) = 0$, this pairing vanishes.
Thus
$
\sum_{\theta\in \Gamma\cdot \beta} \langle \beta',  \theta^\vee \rangle =
\sum_{\theta\in \Gamma\cdot \beta} \langle \beta, \theta^\vee \rangle = 2
$
or $1$, 
depending on whether or not $\Gamma\cdot\beta$ is orthogonal.
(This follows from the properties of root orbits discussed in~\cite{adler-lansky:lifting}*{\S5}.)
Since $\langle \beta',\theta^\vee\rangle\leq 0$ for all $\theta\neq\beta'$ in $\Gamma\cdot\beta$, it follows that
$\beta' \in\Gamma\cdot\beta$.
\end{proof}

For each $\alpha\in\downstairs\Phi$, define
\[
\alpha^\vee = \frac{|\Gamma\cdot\beta|}{|\Xi_\beta|}\sum_{\xi\in\Xi_\beta}\xi^\vee\in \downstairs X_*,
\]
where $\beta$ is any element of $\Phi$ such that $i^*\beta = \alpha$.
The element of
$\downstairs X_*$ defined by the above formula is independent of the particular choice of $\beta$ by Lemma~\ref{lem:orbit}.
Note that $\alpha^\vee$ does indeed lie in $\downstairs X_*$
since $|\Gamma\cdot\beta|/|\Xi_\beta| = 1$ or $2$.
Let $\downstairs\Phi^\vee = \{\alpha^\vee\mid \alpha\in\downstairs\Phi\}$.

\begin{thm}
\label{thm:restriction}
With the above notation,
$\downstairs\Psi
:= (\downstairs X^*,\downstairs \Phi,\downstairs X_*,\downstairs \Phi^\vee)$
is a root datum.
\end{thm}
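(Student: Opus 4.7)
The plan is to verify the root-datum axioms for $\downstairs\Psi$ directly. The lattices $\downstairs X^*$ and $\downstairs X_*$ are free abelian of finite rank and are put in perfect duality by the preamble, and the sets $\downstairs\Phi,\downstairs\Phi^\vee$ are finite. Well-definedness of $\alpha\mapsto\alpha^\vee$ follows from Lemma~\ref{lem:orbit}: any two preimages of $\alpha\in\downstairs\Phi$ lie in the same $\Gamma$-orbit, hence determine the same $\Xi_\beta$ and the same scalar $|\Gamma\cdot\beta|/|\Xi_\beta|\in\{1,2\}$; moreover $\sum_{\xi\in\Xi_\beta}\xi^\vee$ is $\Gamma$-invariant since $\Xi_\beta$ is $\Gamma$-stable by Remark~\ref{rem:weyl-lift}, so $\alpha^\vee\in X_*^\Gamma=\downstairs X_*$. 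The relation $\langle\alpha,\alpha^\vee\rangle=2$ follows by combining the equality $\iota\alpha=|\Gamma\cdot\beta|^{-1}\sum_{\theta\in\Gamma\cdot\beta}\theta$ with the identity $\sum_{\theta\in\Gamma\cdot\beta}\langle\theta,\xi^\vee\rangle\in\{1,2\}$ derived in the proof of Lemma~\ref{lem:orbit}; the scaling factor $|\Gamma\cdot\beta|/|\Xi_\beta|$ exactly compensates.

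The central step, and where I expect the main difficulty, is realizing the reflection $x\mapsto x-\langle x,\alpha^\vee\rangle\alpha$ on $\downstairs X^*$ as coming from a Weyl-group element. For $\alpha=i^*\beta\in\downstairs\Phi$, consider
\[
w := \prod_{\xi\in\Xi_\beta} w_\xi.
\]
This lies in $W^\Gamma$: the factors commute by the orthogonality of $\Xi_\beta$ (Remark~\ref{rem:weyl-lift}), and the set is $\Gamma$-stable. I claim that the induced action of $w$ on $\downstairs X^*$ is precisely $x\mapsto x-\langle x,\alpha^\vee\rangle\alpha$. Computing, for $x\in X^*$,
\[
i^*(wx) = i^*x - \sum_{\xi\in\Xi_\beta}\langle x,\xi^\vee\rangle\,i^*\xi,
\]
and the task is to show this equals $i^*x-\langle i^*x,\alpha^\vee\rangle\alpha$. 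One treats two cases: the orthogonal case, where $i^*\xi=\alpha$ and $\alpha^\vee=\sum\xi^\vee$, and the non-orthogonal case, where $\xi=\eta+\eta'$ with $\eta,\eta'\in\Gamma\cdot\beta$, so that $i^*\xi=2\alpha$ while $\alpha^\vee=2\sum\xi^\vee$. In each case, using that $\alpha^\vee\in X_*^\Gamma$ gives $\langle i^*x,\alpha^\vee\rangle=\langle x,\alpha^\vee\rangle$, and the factors of $2$ in the non-orthogonal case conspire to yield the claim. An analogous calculation shows that the contragredient action of $w$ on $X_*$ restricts to $\lambda\mapsto\lambda-\langle\alpha,\lambda\rangle\alpha^\vee$ on $\downstairs X_*=X_*^\Gamma$.

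With these reflections identified inside $W^\Gamma$, the rest is quick. Since $w\in W$ preserves $\Phi$, its induced action on $\downstairs X^*$ preserves $\downstairs\Phi=i^*\Phi$, giving the first reflection axiom. For the dual axiom, applying $w$ (acting contragrediently on $X_*$) to $\alpha_1^\vee=c_1\sum_{\xi\in\Xi_{\beta_1}}\xi^\vee$ yields $\alpha_2^\vee$, where $\alpha_2=i^*(w\beta_1)\in\downstairs\Phi$: since $w\in W^\Gamma$ is an orthogonal, $\Gamma$-equivariant transformation, it preserves the orthogonality dichotomy defining $\Xi$, so $w\Xi_{\beta_1}=\Xi_{w\beta_1}$ with matching scalar $c_1$. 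Finally, injectivity of $\alpha\mapsto\alpha^\vee$ follows from Lemma~\ref{lem:fidelite}: the element $\alpha^\vee$ determines $w\in W^\Gamma$ (as its associated reflection), whose $(-1)$-eigenline on $\downstairs V^*$ is $\Q\alpha$, and the relation $\langle\alpha,\alpha^\vee\rangle=2$ then fixes the scalar.
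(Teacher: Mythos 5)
Your proposal follows essentially the same route as the paper's own proof: the heart of both arguments is realizing the would-be reflection $w_\alpha$ on $\downstairs X^*$ as the restriction of $w := \prod_{\xi\in\Xi_\beta} w_\xi \in W^\Gamma$, then using Lemma~\ref{lem:fidelite} to embed the resulting reflection group into the finite group $W^\Gamma$. Your verification of $w_\alpha(i^*x)=i^*(wx)$ by splitting into the orthogonal and non-orthogonal cases is correct; the paper reaches the same identity uniformly by exploiting $\sum_{\theta\in\Gamma\cdot\beta}\theta=\sum_{\xi\in\Xi_\beta}\xi$ and the fact that $i^*$ is constant on $\Xi_\beta$, which avoids the case split but amounts to the same bookkeeping.

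Two smaller remarks. First, the paper cites the axiom list from \cite{springer:corvallis}*{\S1.1}, which only requires duality, $\langle\alpha,\alpha^\vee\rangle=2$, and that the $w_\alpha$ stabilize $\downstairs\Phi$ and generate a \emph{finite} group; the dual-reflection axiom and injectivity of $\alpha\mapsto\alpha^\vee$ that you also verify are then automatic, so that portion of your argument is extra work. If you do want to keep the bijectivity check, your argument for it is a little circular as written: the reflection $w_\alpha$ on $\downstairs X^*$ depends on both $\alpha$ and $\alpha^\vee$, so $\alpha^\vee$ alone does not ``determine $w\in W^\Gamma$'' without first invoking the existence of a $\downstairs W$-invariant inner product (which itself needs finiteness of $\downstairs W$) to argue that the reflection is orthogonal and hence determined by its $(-1)$-eigenline. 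Second, you should make the finiteness of the group generated by the $w_\alpha$ an explicit conclusion: it is precisely Lemma~\ref{lem:fidelite} together with your identification $w_\alpha = w|_{\downstairs X^*}$ for $w\in W^\Gamma$ that gives the embedding $\downstairs W\hookrightarrow W^\Gamma$ and hence finiteness, which is the axiom doing the real work here.
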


\begin{rem}
If $\Psi$ comes equipped with an action of $\Gal(k)$, and the action
of $\Gamma$ commutes with that of $\Gal(k)$, then it is clear
that the action of $\Gal(k)$ preserves $\downstairs\Psi$.
\end{rem}

\begin{proof}[Proof of Theorem \ref{thm:restriction}]
According to~\cite{springer:corvallis}*{\S1.1}, it suffices to show that
\begin{itemize}
\item
$\downstairs X^*$ and $\downstairs X_*$ are in duality
(which we have already observed),
\item
$\langle \alpha,\alpha^\vee\rangle = 2$ for all $\alpha\in\downstairs\Phi$,
and
\item
The automorphisms $w_\alpha$ of $\downstairs X^*$ of the form
\[
w_\alpha(\downstairs x)
= \downstairs x - \langle \downstairs x,\alpha^\vee\rangle\alpha\qquad
	\text{(for $\alpha\in\downstairs\Phi$)}
\]
stabilize $\downstairs\Phi$
and generate a finite subgroup of $\Aut(\downstairs X^*)$.
\end{itemize}

Let $\alpha\in\downstairs\Phi$. Choose $\beta\in\Phi$ such that $i^*\beta = \alpha$, and choose $\xi_0\in\Xi_\beta$.
Then we have
\begin{align*}
\langle \alpha,\alpha^\vee\rangle
&=
\langle \iota\alpha,i_*\alpha^\vee\rangle\\
&=
\Bigl\langle
\frac{1}{|\Gamma\cdot\beta|}\sum_{\theta\in\Gamma\cdot\beta}\theta,
\frac{|\Gamma\cdot\beta|}{|\Xi_\beta|}\sum_{\xi\in\Xi_\beta}\xi^\vee
\Bigr\rangle
\\
&=
\frac{1}{|\Xi_\beta|}
\Bigl\langle
\sum_{\theta\in\Gamma\cdot\beta}\theta,
\sum_{\xi\in\Xi_\beta}\xi^\vee
\Bigr\rangle
\\
&=
\frac{1}{|\Xi_\beta|}
\Bigl\langle
\sum_{\xi'\in\Xi_\beta}\xi',
\sum_{\theta\in\Xi_\beta}\xi^\vee
\Bigr\rangle
&& \text{(by the definition of $\Xi_\beta$)}
\\
&=
\langle \xi_0 ,\xi_0^\vee\rangle
&& \text{(by Remark \ref{rem:weyl-lift})}
\\
&= 2,
\end{align*}
as desired.

Now let $\downstairs x\in \downstairs X^*$, and 
choose $x\in X^*$ such that $i^* x = \downstairs x$. Then
\begin{equation}
\label{eq:integral}
\begin{aligned}
\langle \downstairs x,\alpha^\vee\rangle &= 
\langle x,i_* \alpha^\vee\rangle  \\
&= \Bigr\langle x,
\frac{|\Gamma\cdot\beta|}{|\Xi_\beta|}\sum_{\xi\in\Xi_\beta}{\xi}^\vee\Bigr\rangle
&& \text{(by Remark \ref{rem:weyl-lift})}\\
&= \frac{|\Gamma\cdot\beta|}{|\Xi_\beta|}\sum_{\xi\in\Xi_\beta}
\langle x, \xi^\vee\rangle.
\end{aligned}
\end{equation}
It follows that
\begin{align*}
w_{\alpha}(\downstairs x)
&= \downstairs x- \langle \downstairs x,\alpha^\vee \rangle \alpha\\
&= i^* x - \langle \downstairs x,\alpha^\vee \rangle i^*\beta\\
&= i^* x -  \frac{\langle  \downstairs x,\alpha^\vee \rangle}{|\Gamma\cdot\beta|}
i^* \Bigl(\sum_{\theta\in\Gamma\cdot\beta}\theta\Bigr) 
&& \text{(by Lemma \ref{lem:orbit})} \\
&= i^* x  - \frac{\langle \downstairs x, \alpha^\vee \rangle}{|\Gamma\cdot\beta|}
i^*\Bigl(\sum_{\xi'\in\Xi_\beta}\xi'\Bigr)
&& \text{(by the definition of $\Xi_\beta$)} \\
&=
i^* x  - \frac{1}{|\Xi_\beta|}
\sum_{\xi\in\Xi_\beta}
\langle x , \xi^\vee\rangle
i^*
\Bigl(
\sum_{\xi'\in\Xi_\beta}
	\xi'
\Bigr)
&& \text{(by \eqref{eq:integral})} \\
&= i^* x  - \sum_{\xi\in\Xi_\beta}\langle x , \xi^\vee\rangle
i^*
\Bigl(
	\frac{1}{|\Xi_\beta|}
	\sum_{\xi'\in\Xi_\beta} \xi'
\Bigr).
\end{align*}
But by Remark~\ref{rem:weyl-lift}, for any $\xi\in\Xi_\beta$,
\[
i^*\Bigl( \frac{1}{|\Xi_\beta|} \sum_{\xi'\in\Xi_\beta} \xi'\Bigr) = i^*\xi,
\]
so we have
\begin{equation}
\label{eq:weyl-image}
w_{\alpha}(\downstairs x) = i^*\Bigl(x  - \sum_{\xi\in\Xi_\beta}
\left\langle x , \xi^\vee\right\rangle \xi\Bigr).
\end{equation}
Also by Remark~\ref{rem:weyl-lift}, the reflections $w_{\xi}\in W$ for $\xi\in\Xi_\beta$
all commute with one another.
If $w$ denotes their product, then
\[
x - \sum_{\xi\in\Xi_\beta}\langle x,\xi^\vee\rangle\xi  = w (x),
\]
so by \eqref{eq:weyl-image}, we have
\begin{equation}
\label{eq:embedding}
w_{\alpha}(\downstairs x)
= i^*(w(x)).
\end{equation}
In particular,
if $\alpha'\in\downstairs \Phi$,
and $\beta'\in \Phi$ satisfies $i^*\beta' = \alpha'$, then
\[
w_{\alpha}(\alpha')
= i^*(w(\beta'))\in i^*(\Phi) = \downstairs \Phi ,
\]
so $\downstairs \Phi$ is stable under the action of $w_\alpha$, as desired.

It remains to show that the group
$\downstairs W
:= \langle w_\alpha\mid\alpha\in\downstairs\Phi\rangle\subset \Aut(\downstairs X^*)$
is finite.
To accomplish this,
we show that $\downstairs W$ embeds naturally in the finite group
$W^\Gamma$.
By Lemma~\ref{lem:fidelite}, there is a natural injection
\[
W^\Gamma\longrightarrow\Aut(\downstairs X^*).
\]
To construct an embedding $\downstairs W\longrightarrow W^\Gamma$,
it is therefore
enough to show that the image of this injection contains $\downstairs W$.
Thus, given $\downstairs w\in \downstairs W$,
we will show that there exists $w \in W^\Gamma$
whose action on $\downstairs X^*$ coincides with that of $\downstairs w$.
It suffices to prove the existence of $w$ only in the case in which
$\downstairs w$ is a reflection
$w_\alpha$ through a root $\alpha\in\downstairs\Phi$.
In this case, let
$w = \prod_{\xi\in\Xi_\beta}w_\xi$,
where 
$\beta\in\Phi$ is such that $i^*\beta = \alpha$.
It follows from Remark~\ref{rem:weyl-lift} that $w\in W^\Gamma$,
and it follows from \eqref{eq:embedding} that for any $x\in X^*$,
\[
w_\alpha(i^* x) = i^*(w(x)) = w(i^* x).
\]
This establishes the existence of the desired embedding.
\end{proof}

\begin{rem}
\label{rem:reduced}
If $\Phi$ is reduced, then so is
the root system $\downstairs\Phi$ constructed above,
unless
$\Phi$ has an irreducible factor of type $A_{2n}$ whose stabilizer in $\Gamma$
acts upon it nontrivially.
To see this, it is easy
to reduce to the case where $\Phi$ is irreducible
and $\Gamma$ is cyclic (see~\cite{adler-lansky:lifting}*{Proposition 3.5}).
The result then follows from 
\cite{kottwitz-shelstad:twisted-endoscopy}*{\S1.3}.
\end{rem}

\begin{rem}
\label{rem:reduced-subsystem}
There is a way to choose a maximal reduced subsystem
that we will later see is preferred. (See Lemma~\ref{lem:weyl-fixed-pinning}.)
Specficially, take only the nondivisible (resp.\ nonmultipliable)
roots of $\downstairs\Phi$ according as $\chr k$ is not two
(resp.\ two).
\end{rem}

\begin{lem}
\label{lem:pos-systems}
The map $i^*$ induces a bijection between
the set of $\Gamma$-invariant positive systems in $\Phi$
and the set of positive systems in $\downstairs\Phi$.
\end{lem}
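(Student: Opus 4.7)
The plan is to characterize positive systems using generic linear functionals on the coroot side, and to use an averaging argument in $V_*$ to transfer between $\Gamma$-invariant positive systems of $\Phi$ and arbitrary positive systems of $\downstairs\Phi$. Recall that a positive system in a (not necessarily reduced) root system $\Phi$ is precisely a subset of the form $\Phi^+(v) := \{\beta \in \Phi : \langle \beta, v \rangle > 0\}$ for $v$ in the complement of the root hyperplanes in the ambient coroot space, with two such $v$'s yielding the same positive system exactly when they lie in the same chamber of that complement; the analogous statement holds for $\downstairs\Phi$ in $\downstairs V_*$.

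The bridge between the two pictures is the transpose identity $\langle \beta, i_* v\rangle = \langle i^*\beta, v\rangle$. Combined with the surjectivity of $i^*\colon \Phi \twoheadrightarrow \downstairs\Phi$ and the fact that $i^*\beta \neq 0$ for every $\beta\in\Phi$---which follows from $\Gamma$-stability of $\Delta$, since $\iota(i^*\beta) = \frac{1}{|\Gamma|}\sum_{\gamma\in\Gamma} \gamma\beta$ is a nonzero nonnegative (or nonpositive) combination of simple roots---this shows that $v\in\downstairs V_*$ is generic for $\downstairs\Phi$ if and only if $i_* v$ is generic for $\Phi$, and in that case
\[
\Phi^+(i_* v) \;=\; (i^*)^{-1}(\downstairs\Phi^+(v))\cap\Phi,
\qquad i^*(\Phi^+(i_* v)) \;=\; \downstairs\Phi^+(v).
\]
Since $i_* v$ is $\Gamma$-fixed, $\Phi^+(i_* v)$ is automatically $\Gamma$-invariant. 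This yields mutually inverse candidate maps---image under $i^*$ one way, preimage under $i^*$ the other---at least restricted to positive systems arising from $v\in\downstairs V_*$.

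It remains to verify that every positive system in $\downstairs\Phi$, and every $\Gamma$-invariant positive system in $\Phi$, is of this form. The first is the standard chamber description applied inside $\downstairs V_*$. For the second, suppose $\Phi^+ = \Phi^+(v_0)$ is $\Gamma$-invariant, with $v_0\in V_*$ generic. Then the chamber $C$ of $V_*\setminus\bigcup_{\beta\in\Phi}\ker\beta$ containing $v_0$ is $\Gamma$-stable as a set: for $\gamma\in\Gamma$, $\gamma C$ is the chamber associated to $\gamma\Phi^+ = \Phi^+$, hence equals $C$. As $C$ is open and convex, the average $\frac{1}{|\Gamma|}\sum_{\gamma\in\Gamma}\gamma v_0$ lies in $C$ and is $\Gamma$-fixed, so it equals $i_* v$ for some generic $v\in\downstairs V_*$; then $\Phi^+ = \Phi^+(i_* v)$, as required. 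The main obstacle is modest, amounting to organizing the two pairing compatibilities so that genericity transfers correctly; given this, bijectivity is formal, and Lemma~\ref{lem:orbit} confirms that the preimage recipe simply reassembles each $\Gamma$-orbit of roots over a given element of $\downstairs\Phi$.
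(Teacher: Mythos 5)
Your proof is correct and takes essentially the same approach as the paper: characterize positive systems via generic vectors, use the transpose identity $\langle \beta, i_* v\rangle = \langle i^*\beta, v\rangle$ to pass between the two settings, and average a generic vector over $\Gamma$ for the forward direction. Your argument is actually a bit more careful than the paper's in one spot---you justify via chamber convexity why the averaged vector is still generic and defines the same positive system, a step the paper asserts implicitly.
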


\begin{proof}
Let $\Pi\subseteq\Phi$ be a $\Gamma$-invariant positive system.
Let $\downstairs\Pi = i^*(\Pi)\subseteq \downstairs\Phi$.
Then there is some vector $v\in V_*$ such that for every root $\beta\in\Phi$,
we have that
$\langle \beta, v \rangle \neq 0$,
and
$\langle \beta,v \rangle > 0$ if and only if $\beta\in\Pi$.
Since $\Pi$ is $\Gamma$-invariant, we may replace $v$ by
$\sum_{\gamma\in\Gamma} \gamma v$,
and thus assume that $v$ is $\Gamma$-invariant,
and so lies in $\downstairs V_*$.
Suppose $\alpha\in\downstairs \Phi$.
Then $\alpha = i^*\beta$ for some $\beta\in\Phi$,
so $\langle \alpha, v \rangle = \langle \beta, v\rangle$.
Thus, $\langle \alpha, v \rangle \neq 0$,
and
$\langle \alpha, v \rangle > 0$
if and only if $\alpha\in \downstairs\Pi$.
This shows that $\downstairs\Pi$ is a positive system in $\downstairs\Phi$.

Conversely, suppose that $\downstairs\Pi\subseteq\downstairs\Phi$
is a positive system,
and let $\Pi = {i^*}\inv \downstairs \Pi$.
Then there is some vector $\downstairs v \in \downstairs V_*$ such that for every
root $\alpha \in \downstairs \Phi$,
we have that
$\langle \alpha, \downstairs v\rangle \neq 0$, and
$\langle \alpha, \downstairs v\rangle > 0$
if and only if $\alpha \in \downstairs \Pi$.
For every root $\beta\in \Phi$, we have
$\langle \beta, i_* v \rangle = \langle i^*\beta, v \rangle$,
which is never zero, and is positive if and only if $\beta \in \Pi$.
Thus, $\Pi\subset\Phi$ is a positive system.
Since $i_*v$ is $\Gamma$-invariant, so is $\Pi$.
\end{proof}

\begin{cor}
\label{cor:weyl-isomorphism}
Let $\downstairs W$ be the Weyl group  of $\downstairs\Psi$.
Then the embedding of $\downstairs W$ into $W^\Gamma$
in the proof of Theorem~\ref{thm:restriction} is an isomorphism.
\end{cor}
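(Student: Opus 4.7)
The goal is to show that the injection $\downstairs W \hookrightarrow W^\Gamma$ built at the end of the proof of Theorem~\ref{thm:restriction} is surjective. My plan is a cardinality argument: compare the orders of the two groups via their natural actions on positive systems.

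I would let $S_1$ denote the set of $\Gamma$-invariant positive systems in $\Phi$ and $S_2$ the set of positive systems in $\downstairs\Phi$. Lemma~\ref{lem:pos-systems} gives a bijection $\abmap{S_1}{S_2}$, so $|S_1| = |S_2|$. The classical theory of Weyl groups provides that $\downstairs W$ acts simply transitively on $S_2$, hence $|\downstairs W| = |S_2|$.

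It then suffices to show that $W^\Gamma$ acts simply transitively on $S_1$. Freeness is inherited from the free action of $W$ on the set of all positive systems of $\Phi$. For transitivity, given $\Pi, \Pi' \in S_1$, I would take the unique $w \in W$ with $w\Pi = \Pi'$; for each $\gamma \in \Gamma$, the $\Gamma$-invariance of $\Pi$ and $\Pi'$ forces $\gamma w\gamma\inv$ to send $\Pi$ to $\Pi'$ as well, and uniqueness then gives $w \in W^\Gamma$. Hence $|W^\Gamma| = |S_1| = |S_2| = |\downstairs W|$, and the injection is an isomorphism. I foresee no real obstacle; modulo Lemma~\ref{lem:pos-systems}, the argument rests entirely on the standard simple transitivity of a Weyl group on its positive systems.
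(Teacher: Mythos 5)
Your argument is essentially the paper's: both combine Lemma~\ref{lem:pos-systems} with the simple transitivity of $\downstairs W$ on positive systems of $\downstairs\Phi$ and of $W^\Gamma$ on $\Gamma$-invariant positive systems of $\Phi$. The only difference is that you supply a short proof of the latter simple-transitivity claim (via uniqueness of the Weyl element and conjugation by $\gamma$), which the paper states without proof; this is a welcome bit of extra detail but not a change of method.
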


\begin{proof}
Since
$\downstairs W$ acts simply transitively on the set of
positive systems in $\downstairs \Phi$,
and
$W^\Gamma$ acts simply transitively on the set of
$\Gamma$-invariant
positive systems in $\Phi$,
the result follows from Lemma \ref{lem:pos-systems}.
\end{proof}

\section{Automorphisms of root data and automorphisms of reductive groups}
\label{sec:automorphisms}
Let $\Psi = (X^*,\Phi,X_*,\Phi^\vee)$ be a root datum on which a group $\Lambda$ acts via automorphisms.
Choosing a root basis $\Delta$ of $\Phi$, we obtain a corresponding based root datum $\dot\Psi$.
Then $\dot\Psi$ also carries an action of $\Lambda$. Namely, for $\sigma\in\Lambda$, there exists
a unique element
$c(\sigma)$ in the Weyl group $W(\Psi)$ of $\Psi$ such that $\sigma(\Delta) = c(\sigma)(\Delta)$.
If we define $\sigma^\star$ to be the automorphism of $\dot\Psi$ given by
\begin{equation}
\label{eq:*-action}
\sigma^\star\chi = c(\sigma)\inv(\sigma\chi)
\end{equation}
for $\chi\in X^*$,
then the action of $\Lambda$ on $\dot\Psi$ is given by
$\sigma\mapsto\sigma^\star$.

Since $\Lambda$ acts on $\Psi$ and $\dot\Psi$,
it acts naturally on $\Aut(\Psi)$ and $\Aut(\dot\Psi)$, as well as on
the Weyl groups $W(\Psi)\subset\Aut(\Psi)$ and $W(\dot\Psi)\subset\Aut(\dot\Psi)$. 
Just as the actions of $\Lambda$ on $\dot\Psi$ and on $\Psi$ differ,
so the actions of $\Lambda$ on $W(\dot\Psi)$ and on $W(\Psi)$ differ,
even though the Weyl groups themselves are equal.
For $\sigma\in \Lambda$ and $w\in W(\dot\Psi)$, 
let $(\sigma ,w)\mapsto\sigma^\star(w)$ denote the action
of $\Lambda$ on $W(\dot\Psi)$.
Then we have
\[
\sigma w = c(\sigma)\, (\sigma^\star w) \, c(\sigma)\inv .
\]
One can check readily that map $\map{c}{\Lambda}{W(\dot\Psi)}$
is a cocycle in $Z^1(\Lambda,W(\dot\Psi))$.

We now turn our attention to based root data arising from reductive algebraic groups.
Let $G$ be a connected reductive $k$-group,
$B$ a Borel subgroup of $G$, and $T\subseteq B$ a maximal torus of $G$.
Let $\dot\Psi(G,B,T)$ denote the corresponding based root datum.

Any map $\vartheta \in\Aut(G)$ determines an obvious isomorphism
\[
\map{\vartheta ^*}{\dot\Psi(G,B,T)}{\dot\Psi(G,\vartheta (B),\vartheta (T))}.
\]
There is a natural homomorphism
\[
\map{\pi}{\Aut(G)}{\Aut(\dot\Psi(G,B,T))}
\]
defined as follows.
For $\vartheta \in\Aut(G)$, choose $g_\vartheta \in G(k\sep)$ such that $\Int(g_\vartheta )$ takes
$\vartheta (B)$ to $B$, and $\vartheta (T)$ to $T$. Then $\Int(g_\vartheta )\circ\vartheta $ stabilizes $B$ and $T$,
and we let $\pi(\vartheta )$ be the automorphism $(\Int(g_\vartheta )\circ\vartheta )^*$ of $\dot\Psi(G,B,T)$
(which is, in fact, independent of the choice of $g_\vartheta $).

Now suppose that $T$ is defined over $k$. Then
an element $\sigma\in \Gal(k)$ naturally determines an automorphism of $\Psi(G,T)$ hence an automorphism
$\sigma^\star$ of $\dot\Psi(G,B,T)$ as defined in \eqref{eq:*-action}.
We thus obtain an action of $\Gal(k)$ on $\dot\Psi(G,B,T)$, hence one on $\Aut(\dot\Psi(G,B,T))$ as above.
These actions are independent of the particular choice of $B$ and $T$
in the sense that if $g\in G(k\sep)$ and $\sigma\in\Gal(k)$, then we have
\begin{equation}
\label{eq:*-action-equivariance}
\sigma^\star \circ \Int (g)^* = \Int(g)^*\circ \sigma^\star ,
\end{equation}
where we use the notation $\sigma^\star$ to denote both the action of $\sigma$ on $\dot\Psi(G,B,T)$
and on $\dot\Psi(G,\lsup{g}B,\lsup{g}T)$.

There is a well-known exact sequence
\begin{equation}
\label{eq:automorphisms}
1 \longrightarrow  \Inn(G) \longrightarrow  \Aut(G) \stackrel{\pi}{\longrightarrow}
\Aut(\dot\Psi(G,B,T)) \longrightarrow  1 .
\end{equation}
We note that the homomorphisms in \eqref{eq:automorphisms} are $\Gal(k)$-equivariant.

\begin{rem}
\label{rem:splitting}
Let $\Delta$ be the set of simple roots for $(G,B,T)$. Let $\{ X_\alpha\}_{\alpha\in\Delta}\subset \Lie(G)(k\sep)$
be a pinning. It is well known~\cite{springer:corvallis}*{Cor.~2.14}
that $\{ X_\alpha\}$ determines a unique splitting $\psi$ of \eqref{eq:automorphisms}.
Namely, if $f\in\Aut(\dot\Psi(G,B,T))$, define $\psi(f)$ to be the automorphism of $G$ such that 
\begin{itemize}
\item $\psi(f)$ stabilizes $B$ and $T$,
\item the restriction of $\psi(f)$ to $T$ is determined by the automorphism of $\bX^*(T)$ given by $f$, and
\item $\psi(f) (X_\alpha) = X_{f(\alpha)}$.
\end{itemize}
Thus $\Image(\psi)$ lies in the subgroup $\Aut(G,B,T,\{ X_\alpha\})$ of $\Aut(G)$ consisting of automorphisms
that stabilize $B$, $T$, and the pinning $\{ X_\alpha\}$.
If $B$ and $T$ are defined over $k$, and $\{ X_\alpha\}$ is $\Gal(k)$-stable,
it follows from~\cite{demazure:sga3-24}*{\S3.10}
that $\psi$ is $\Gal(k)$-equivariant.
\end{rem}

\section{Proofs of Theorems}
\label{sec:proofs}

\begin{proof}[Proof of Theorem \ref{thm:root-data-to-group}]
Consider an abstract root datum $\Psi=(X^*,\Phi,X_*,\Phi^\vee)$ with an action of $\Gal(k)\times\Gamma$. Suppose
that $\Delta$ is a $\Gamma$-stable base for $\Psi$. Let $\dot\Psi$ be the corresponding based root datum.
As discussed in~\S\ref{sec:automorphisms}, the action of $\Gal(k)\times\Gamma$ on $\Psi$ determines one of
$\Gal(k)\times\Gamma$ on $\dot\Psi$.
Since $\Delta$ is $\Gamma$-stable, the actions of $\Gamma$ on $\Psi$ and $\dot\Psi$ coincide.
In the notation of \eqref{eq:*-action} with $\Lambda = \Gal(k)$, the elements $c(\sigma)\in W(\dot\Psi)$ that arise from the action of $\Gal(k)$ on $\dot\Psi$
must lie in $W(\dot\Psi)^\Gamma$ since this action commutes with that of $\Gamma$.
Therefore, the map $\map{c}{\Gal(k)}{W(\dot\Psi)^\Gamma}$
is a cocycle in $Z^1(k,W(\dot\Psi)^\Gamma)$.
We note that the $\Gal(k)\times\Gamma$-isomorphism class of $\dot\Psi$ depends only on that of $\Psi$.

By~\cite{springer:lag-article}*{Theorem 6.2.7}, there exists
a triple $(G,B_0,T_0)$, unique up to $k$-isomorphism,
consisting of a $k$-quasisplit connected reductive group $G$, a Borel $k$-subgroup $B_0$ of $G$,
and a maximal $k$-torus $T_0$ of $B_0$,
such that the associated based root datum $\dot\Psi(G,B_0,T_0)$ is $\Gal(k)$-isomorphic to $\dot\Psi$.
We will identify $\dot\Psi$ and $\dot\Psi(G,B_0,T_0)$ via such an isomorphism.

Let $\{X_\alpha\}$ be a $\Gal(k)$-stable pinning for $G$ relative to $B_0$ and $T_0$. 
The action of $\Gamma$ on $\dot\Psi$ determines a homomorphism $\map{\phi}{\Gamma}{\Aut(\dot\Psi)}$.
Let $\varphi$ be the composition
\[
\varphi: \Gamma \stackrel{\phi}{\longrightarrow} \Aut(\dot\Psi) = \Aut(\dot\Psi(G,B_0,T_0))
\stackrel{\psi}{\longrightarrow}\Aut(G,B_0,T_0,\{ X_\alpha\}) ,
\]
where $\map{\psi}{\Aut(\dot\Psi(G,B_0,T_0))}{\Aut(G,B_0,T_0,\{ X_\alpha\})}$ is the homomorphism
from Remark~\ref{rem:splitting}.

Let $\downstairs G = \bigl(G^{\varphi(\Gamma)}\bigr)\conn$
and
$\downstairs T_0 = \bigl(T_0^{\varphi(\Gamma)}\bigr)\conn$.
By Lemma \ref{lem:weyl-fixed-pinning},
$\downstairs G$ is a $k$-quasisplit reductive group,
$\downstairs T_0$ a maximal $k$-torus of $\downstairs G$,
and
\[
W(\dot\Psi)^\Gamma = W(G,T_0)^{\varphi(\Gamma)} = W(\downstairs G,\downstairs T_0).
\]
Thus we may view $c$ as a cocycle in $Z^1(k,W(\downstairs G,\downstairs T_0))$. 

By~\cite{raghunathan:tori}*{Theorem 1.1}, there is some $g\in\downstairs G(k\sep)$ such that for all
$\sigma\in\Gal(k)$, $g^{-1}\sigma(g)$ lies in the normalizer $N_{\downstairs G}(\downstairs T_0)(k\sep)$, and the
image of $g^{-1}\sigma(g)$ in $W(\downstairs G,\downstairs T_0)$ is equal to $c(\sigma)$.
Let $T = \lsup{g}T_0$
and $B = \lsup{g}B_0$.
Since $g$ is $\varphi(\Gamma)$-fixed, $T$ is a $\varphi(\Gamma)$-stable
maximal $k$-torus of $G$, and $B$ is a $\varphi(\Gamma)$-stable
Borel subgroup of $G$ containing $T$.
We have therefore associated to $\Psi$ a triple $(G,T,\varphi)$ of the required
type.

Suppose we vary the arbitrary choices made in the above construction of $(G,T,\varphi)$.
That is, suppose we choose
\begin{itemize}
\item another root datum $\Psi'$ that is $\Gal(k)\times\Gamma$-isomorphic to $\Psi$, a based root datum
$\dot\Psi'$ with underlying datum $\Psi'$ and base $\Delta'$, and hence a cocycle
$c'$ in $Z^1(k,W(\dot\Psi')^\Gamma)$;
\item another triple of $k$-groups $(G', B'_0,T'_0)$ $k$-isomorphic to $(G,B_0,T_0)$ and an identification
of $\dot\Psi(G',B'_0,T'_0)$ with $\dot\Psi'$; and
\item a $\Gal(k)$-stable pinning $\{ X_\alpha'\}$
of $G'$ relative to $B'_0$ and $T'_0$, along with the associated map
$\map{\psi'}{\Aut(\dot\Psi(G',B'_0,T'_0))}{\Aut(G',B'_0,T'_0,\{ X_\alpha'\})}$
from Remark~\ref{rem:splitting}.
\end{itemize}
We will show that these choices lead to a triple $(G',T',\varphi')$ that is equivalent to $(G,T,\varphi)$.

By assumption, there exists a $\Gal(k)\times\Gamma$-isomorphism
$\map{\lambda}{\Psi}{\Psi'}$. 
The unique element $w\in W(\Psi')$ mapping $\lambda(\Delta)$ to $\Delta'$ 
must lie in  $W(\Psi')^\Gamma$ since $\lambda(\Delta)$ and $\Delta'$ are both $\Gamma$-stable.
The map $\abmap{\lambda(\dot\Psi)}{\dot\Psi'}$ induced by $w$ is equivariant
with respect to the actions of $\Gal(k)$ on these based root data.
It follows that the composition $\dot\lambda = w\lambda$ is a $\Gal(k)\times\Gamma$-isomorphism
$\abmap{\dot\Psi}{\dot\Psi'}$. Via conjugation, $\dot\lambda$ induces 
$\Gamma$-equivariant isomorphisms
$\abmap{W(\dot\Psi)}{W(\dot\Psi')}$
and
$\map{\iotap}{Z^1(k,W(\dot\Psi))}{Z^1(k,W(\dot\Psi'))}$.

The map $\iotap(c) \colon \sigma\mapsto \dot\lambda c(\sigma)\dot\lambda\inv$
is a cocycle in $Z^1(k,W(\dot\Psi')^\Gamma)$,
cohomologous to $c'$; more precisely, one can check that for $\sigma\in\Gal(k)$, 
\begin{equation}
\label{eq:c-prime-vs-tau-c}
c'(\sigma)
= w\inv(\dot\lambda c(\sigma)\dot\lambda\inv)\sigma^{\star\prime}(w)
= w\inv(  \iotap(c)(\sigma) )\sigma^{\star\prime}(w),
\end{equation}
where $\sigma^{\star\prime}$ denotes the result of the action of
$\sigma$ on $w$, viewed as an element of $W(\dot\Psi ')$.

Use a particular $k$-isomorphism between $(G', B'_0,T'_0)$ and $(G,B_0,T_0)$ to identify these triples.
Following the above construction, we obtain a homomorphism
$\map{\varphi'}{\Gamma}{\Aut(G,B_0,T_0,\{ X'_\alpha\})}$, as well as
an element
$g'\in\bigl(G^{\varphi'(\Gamma)}\bigr)\conn$ and a $k$-torus
$T' = \lsup{g'} T_0$, analogous to $g$ and $T$, respectively.

We have a
unique element $\kappa$ of $\Aut_k(\dot\Psi(G,B_0,T_0))$ that produces a commutative square
of $\Gal(k)$-equivariant maps
\begin{equation}
\label{eq:identification}
\begin{xy}
\xymatrix{ 
\dot\Psi \ar[d]^{\dot\lambda} \ar[r] & \dot\Psi(G,B_0,T_0) \ar[d]^\kappa \\
\dot\Psi' \ar[r] & \dot\Psi(G,B_0,T_0)
} 
\end{xy}
\end{equation}
Here the horizontal arrows are the identifications chosen in the 
respective constuctions of $\varphi$ and $\varphi'$.
We therefore obtain a diagram
\begin{equation}
\label{eq:gamma-actions}
\begin{xy}
\xymatrix{ 
& \Aut(\dot\Psi) \ar[dd] \ar[r] & \Aut(\dot\Psi(G,B_0,T_0)) \ar[dd] \\
\Gamma \ar[ru] \ar[rd] & \\
& \Aut(\dot\Psi') \ar[r] & \Aut(\dot\Psi(G,B_0,T_0))
} 
\end{xy}
\end{equation}
in which the square on the right is induced by \eqref{eq:identification}
(and hence commutes),
the vertical maps are given respectively by conjugation by $\dot\lambda$ and $\kappa$,
the maps out of $\Gamma$ are given by the actions of $\Gamma$ on $\dot\Psi$ and $\dot\Psi'$,
and the triangle commutes by the $\Gamma$-equivariance of $\dot\lambda$.

Identifying $c$ and $c'$ respectively with cocycles in $Z^1(k,W(G,T_0)^{\varphi(\Gamma)})$
and $Z^1(k,W(G,T_0)^{\varphi'(\Gamma)})$ as in the above construction,
it follows from \eqref{eq:gamma-actions}
and
\eqref{eq:c-prime-vs-tau-c}
that
\begin{equation}
\label{eq:c-vs-c-prime}
c'(\sigma) = w\inv(\kappa\circ c(\sigma)\circ \kappa\inv)\sigma(w),
\end{equation}
where $\sigma(w)$ here denotes the result of $\sigma\in\Gal(k)$ acting
on the element $w \in W(\dot\Psi')$ via the identification
of this group with the concrete Weyl group $W(G,T_0)$
in \eqref{eq:gamma-actions}.

Let $n\in N_{\downstairs G}(\downstairs T_0)(k\sep)$ be a representative for $w$
and set $\mu = \psi(\kappa)\in\Aut_k(G,B_0,T_0)$.
Then by
\eqref{eq:c-vs-c-prime},
${g'}\inv\sigma(g')$ and $n\inv \mu(g\inv\sigma(g))\sigma(n)$
have the same image in $W(G,T_0)$.
Rearranging terms and letting $h = g'n\inv \mu(g)\inv$, we obtain that
$\sigma(h)$ and $h$ have the same image modulo
\begin{equation}
\label{eq:tori}
\lsup{\sigma(\mu(g)n)}T_0 = \sigma (\lsup{\mu(g)n}T_0) = \sigma (\lsup{\mu(g)}T_0)
 = \sigma (\mu (\lsup{g}T_0)) = \sigma (\mu (T)) = \mu(T).
 \end{equation}
Let $\nu = \Int(h) \circ \mu$.
Since
\begin{multline*}
\nu(T)
= \Int(h)(\mu(T))
= \Int (g'n\inv \mu(g)\inv)(\mu(T))
= \Int (g'n\inv) (\mu(\lsup{g\inv}T)) \\
= \Int (g'n\inv) (\mu(T_0))
= \Int (g'n\inv) (T_0)
= \lsup{g'}T_0
= T',
\end{multline*}
it follows from \eqref{eq:tori} that
$\nu$ gives a $k$-isomorphism $T\longrightarrow T'$.

To show that $(G',T',\varphi')$ is equivalent to $(G,T,\varphi)$, it remains to show that $\nu$ is
$\Gamma$-equivariant.
It follows from the construction of $\varphi$ that $\pi\circ\varphi$
is equal to the composition
$\Gamma\longrightarrow\Aut(\dot\Psi)\longrightarrow\Aut(\dot\Psi(G,B_0,T_0))$
appearing in \eqref{eq:gamma-actions}.
Similarly, $\pi\circ\varphi'$ is equal to the analogous composition
$\Gamma\longrightarrow\Aut(\dot\Psi')\longrightarrow\Aut(\dot\Psi(G,B_0,T_0))$. Thus for any
$\gamma\in\Gamma$, 
\[
\pi(\varphi'(\gamma)) = \kappa\circ\pi(\varphi(\gamma))\circ\kappa\inv .
\]
Applying $\psi$ to this equality and noting that  $\psi\circ\pi\circ\varphi = \varphi$ by construction,
we obtain
\begin{equation*}
\psi(\pi(\varphi'(\gamma))) = \mu\circ\varphi(\gamma)\circ\mu\inv.
\end{equation*}
Note that by definition, $\psi(f)$ and $\psi'(f)$ agree on $T_0$ for any $f\in\Aut(\dot\Psi(G,B_0,T_0))$.
Therefore, as automorphisms of $T_0$, we have
\begin{equation*}
\varphi'(\gamma) =  \psi'(\pi(\varphi'(\gamma))) = \psi(\pi(\varphi'(\gamma))) = \mu\circ\varphi(\gamma)\circ\mu\inv.
\end{equation*}
It follows that, as maps on $T$,
\begin{align*}
\lefteqn{\varphi'(\gamma)\circ\nu}\\
&= \varphi'(\gamma)\circ \Int(h)\circ\mu\\
&= \varphi'(\gamma)\circ \Int(g'n\inv \mu(g)\inv)\circ\mu\\
&= \Int(g')\circ \varphi'(\gamma)\circ \Int(\mu(g)n)\inv\circ\mu\\
&= \Int(g')\circ \mu\circ\varphi (\gamma)\circ\mu\inv\circ \Int(\mu(g)n)\inv\circ\mu \\
&= \Int(g')\circ \mu\circ\varphi (\gamma)\circ \Int(g\mu\inv(n))\inv \\
&= \Int(g')\circ \mu\circ \Int(g\mu\inv(n))\inv\circ\varphi (\gamma) ,
\end{align*}
where the last equality above comes from the fact that $g\in\downstairs G(k\sep)$ and
$\Int(\mu\inv(n))\in W(G,T_0)^{\varphi(\Gamma)}$.
Thus $\varphi'(\gamma)\circ\nu$
is equal to
\[
\Int(g'n\inv\mu(g)\inv)\circ \mu\circ\varphi (\gamma) = \nu\circ\varphi (\gamma),
\]
showing that $\nu$ is $\Gamma$-equivariant.
Therefore, $(G',T',\varphi')$ is equivalent to $(G,T,\varphi)$, and our construction induces a well-defined
map $\map{\sect_\Gamma}{\classR}{\classT}$.

We now show that $\proj_\Gamma\circ\sect_\Gamma$ is the identity map on $\classR$. Let $\Psi$
be a root datum representing some class in $\classR$, and let $(G,T,\varphi)$ be a triple representing the
image of the class of $\Psi$ under $\sect_\Gamma$. We need to show that $\Psi(G,T)$ is
$\Gal(k)\times\Gamma$-isomorphic to $\Psi$. We will make free use of the notation developed in the construction
of $\sect_\Gamma$.

The $\Gal(k)$-equivariant isomorphism of based root data $\dot\Psi\longrightarrow\dot\Psi(G,B_0,T_0)$ chosen
in the definition of $\sect_\Gamma$ is $\Gamma$-equivariant by construction (where
the action of $\Gamma$ on $\dot\Psi(G,B_0,T_0)$ is induced by $\varphi$).
We may therefore identify $\dot\Psi$ and $\dot\Psi(G,B_0,T_0)$ as based root data with
$\Gal(k)\times\Gamma$-action via this isomorphism. This allows us to identify $\Psi$ and $\Psi(G,T_0)$
as root data with $\Gamma$-action (but not necessarily with $\Gal(k)$-action since the actions
of $\Gal(k)$ on $\dot\Psi$ and $\Psi$ differ in general).

Recall the element $g\in \downstairs G(k\sep)$ chosen in the definition of $\sect_\Gamma$.
The map $\map{\Int(g)^*}{\Psi = \Psi(G,T_0)}{\Psi(G,T)}$ is $\Gamma$-equivariant since
$g$ is $\varphi(\Gamma)$-fixed. Furthermore,
$\Int(g)^*$ is $\Gal(k)$-equivariant since for $\sigma\in\Gal(k)$ and $\chi\in X^*(T_0)$,
\begin{align*}
\Int(g)^*(\sigma\chi) &= \Int(g)^*\bigl(c(\sigma)(\sigma^\star \chi)\bigr)\\
 &= \lsup{gg\inv\sigma(g)}(\sigma^\star\chi)\\
 &= \lsup{\sigma(g)}(\sigma^\star\chi)\\
 &= \sigma(\lsup g\chi)\\
 &= \sigma (\Int(g)^*(\chi)) .
\end{align*}
Thus $\Psi(G,T)$ is $\Gal(k)\times\Gamma$-isomorphic to $\Psi$, as desired.

Finally, we show that $\sect_\Gamma\circ\proj_\Gamma$ is the identity map on $\classT$.
Let $(G,T,\varphi)$ represent a class in $\classT$, and let $(G',T',\varphi')$ represent the
image of this class under $\sect_\Gamma\circ\proj_\Gamma$. Since
$\proj_\Gamma\circ (\sect_\Gamma \circ\proj_\Gamma) =
(\proj_\Gamma\circ \sect_\Gamma )\circ\proj_\Gamma = \proj_\Gamma$,
it follows that there is a $\Gal(k)\times\Gamma$ isomorphism
$\abmap{\Psi(G,T)}{\Psi(G',T')}$.
By~\cite{springer:corvallis}*{Theorem 2.9}, this isomorphism is induced
by an isomorphism $\map{\nu}{G}{G'}$ that
restricts to a $\Gamma$-equivariant $k$-isomorphism $\abmap{T}{T'}$.
Thus $(G,T,\varphi)$ and $(G',T',\varphi')$ are equivalent.
\end{proof}

\begin{rem}
\label{rem:fixed-pinning}
Observe that in the definition of the map $\sect_\Gamma$ above,
the triple $(G,T,\varphi)$ is constructed in such a way that $G$
is $k$-quasisplit and $\varphi$ fixes a $\Gal(k)$-invariant pinning of $G$.
Thus, since $\sect_\Gamma \circ \proj_\Gamma$ is the identity map on $\classT$,
we see that every equivalence class in $\classT$ contains such a triple.

Moreover, suppose that $(G, T, \varphi)$ is a triple of this kind.
Applying the construction of $\sect_\Gamma \circ \proj_\Gamma$ to this triple,
we see that the triple we obtain
is precisely $(G,T,\varphi)$,
provided that we make appropriate choices.
\end{rem}

\begin{rem}
Recall that in the proof, it is shown that if $(G,T,\varphi)$ and $(G',T',\varphi')$ are two triples
that arise by applying the $\sect_\Gamma$ construction to a root datum $\Psi$, then
$(G,T,\varphi)$ and $(G',T',\varphi')$ are equivalent. We note that the equivalence $\nu$ constructed in
this case is of a special kind. Namely, $\nu$ is of the form $\Int(h) \circ \mu$, where $h\in G'(k\sep)$ and
$\mu$ is a $k$-isomorphism from $G$ to $G'$.

Now suppose that $(G,T,\varphi)$ and $(G',T',\varphi')$ are arbitrary equivalent triples 
with the properties that $G$ and $G'$ are $k$-quasisplit and 
$\varphi$ and $\varphi'$ fix $\Gal(k)$-invariant pinnings for $G$ and $G'$, respectively.
Then combining the first part of this remark with Remark~\ref{rem:fixed-pinning}, it follows that
there is an equivalence $\nu$ between $(G,T,\varphi)$ and $(G',T',\varphi')$ of the above
special form.
\end{rem}

\begin{rem}
Suppose that $G'$ is $k$-quasisplit and $T'$ is a maximal $k$-torus of $G'$. Suppose that the finite group $\Gamma$
acts via $\Gal(k)$-equivariant automorphisms on $\Psi(G',T')$ preserving a base. Then the equivalence class of
$\Psi(G',T')$ lies in $\classR$. Applying the construction
in the definition of $\sect_\Gamma$ to $\Psi(G',T')$, we obtain a triple $(G,T,\varphi)$ where $G$ is $k$-quasisplit.
Since $\Psi(G',T')$ and $\Psi(G,T)$ are $\Gal(k)$-isomorphic, $G'$ can be taken to equal $G$. 
Moreover, if $g\in G(k\sep)$ is chosen such that $T' = \lsup g T_0$, then 
the cocycle $c$ used to define $T$
can be taken to be the image of $\sigma\mapsto g\inv\sigma(g)$ in 
$Z^1(k,W(G, T_0)^\Gamma)$.
In particular, it follows from~\cite{adler-lansky:lifting}*{Proposition 6.1} that
$T'$ is stably conjugate to $T$ and that the
$\Gal(k)\times \Gamma$-equivariant 
isomorphism between $\Psi(G,T')$ and
$\Psi(G,T)$ can be given by an inner automorphism of $G$.
(Of course, one could also construct another such triple involving a torus not necessarily
stably conjugate to $T'$ by taking the image of $(G,T,\varphi)$ under a rational automorphism of $G$.)
\end{rem}

We now turn our attention to Theorem~\ref{thm:compatibility}. 
Let $G$ be a quasisplit connected reductive $k$-group,
$B$ a Borel subgroup of $G$, and $T\subseteq B$ a maximal $k$-torus of $G$.
Let $\{ X_\alpha\}$ be a $\Gal(k)$-stable pinning of $G$ with respect
to $(B,T)$. Suppose a group $\Gamma$ acts on $G$ via $k$-automorphisms,
preserving $B$, $T$, and $\{X_\alpha\}$.
Then $\Gamma$ acts on the based root datum $\dot\Psi(G,B,T)$,
and we will freely use the notation of \S\ref{sec:root-data} in the following.

In particular, we let $\downstairs \Psi = (\downstairs X^*,\downstairs\Phi,\downstairs X_*,\downstairs\Phi^\vee)$
be the restricted root datum associated to the action of $\Gamma$ on $\Psi(G,B,T)$
by Theorem \ref{thm:restriction}.
Construct a new root datum $\downstairs \Psi'$ by replacing the root system
$\downstairs\Phi$ of $\downstairs\Psi$ by a maximal reduced subsystem
$\downstairs\Phi'$
as in Remark \ref{rem:reduced-subsystem},
and do likewise with the coroot system.

By~\cite{adler-lansky:lifting}*{Proposition~3.5},
$\downstairs G := (G^\Gamma)\conn$ is a reductive $k$-group, and 
$\downstairs T := (T^\Gamma)\conn$ is a maximal $k$-torus of $\downstairs G$. 
We may identify $\downstairs X^*$ with $X^*(\downstairs T)$.
Under this identification, the restriction $\beta_\textrm{res}$ of a root $\beta\in\Phi(G,T)$ to
$\downstairs T$ corresponds to $i^*\beta\in\downstairs X^*$.

\begin{lem}
\label{lem:weyl-fixed-pinning}
Using the above notation,
and under the above identification of $\downstairs X^*$ with $X^*(\downstairs T)$,
we have $\Psi(\downstairs G, \downstairs T) = \downstairs\Psi'$,
and $W(\downstairs G, \downstairs T) = W(G,T)^\Gamma$.
\end{lem}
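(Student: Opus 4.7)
The plan is to compute the root datum of $(\downstairs G, \downstairs T)$ directly from the $\Gamma$-equivariant structure of $G$, and then derive the Weyl group equality as a corollary.

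First, I would identify the character and cocharacter lattices. Since $\downstairs T = (T^\Gamma)\conn$, standard torus theory gives $X_*(\downstairs T) = X_*(T)^\Gamma = \downstairs X_*$, and dually $X^*(\downstairs T) = X^*(T)_\Gamma$ modulo torsion, which is $\downstairs X^*$; the pairings agree with those defined in Section \ref{sec:root-data}, and under these identifications the restriction of a root $\beta \in \Phi$ to $\downstairs T$ is exactly $i^*\beta$. Next, I would analyze the root structure orbit by orbit. For each $\Gamma$-orbit $O \subset \Phi$, consider the $\Gamma$-stable subgroup $H_O \subset G$ generated by $U_{\pm\beta}$ for $\beta \in O$. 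When $O$ is orthogonal ($O = \Xi_\beta$), $H_O$ is a commuting product of rank-one groups permuted transitively by $\Gamma$, and its connected $\Gamma$-fixed subgroup is a diagonally embedded $SL_2$ (or $PGL_2$) contributing the root $\alpha := i^*\beta$ to $\downstairs G$, with coroot $\sum_{\beta' \in O}\beta'^\vee = \sum_{\xi \in \Xi_\beta}\xi^\vee$, matching the $\alpha^\vee$ of Theorem \ref{thm:restriction} (here $|\Gamma\cdot\beta|/|\Xi_\beta| = 1$). When $O$ is non-orthogonal, it partitions into pairs $\{\theta,\theta'\}$ of non-orthogonal roots (with $\theta + \theta' \in \Phi$), each pair generating an $A_2$-factor; $\Gamma$ permutes these factors transitively and swaps the two generators within each. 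The connected $\Gamma$-fixed subgroup of this $\prod A_2$ identifies diagonally with the connected $\Gamma$-fixed subgroup of a single $A_2$-factor under its diagram involution, and an explicit computation inside $SL_3$ (or $PGL_3$) — tracking restrictions of characters and coroots to $\downstairs T$ — shows this is a rank-one subgroup whose root is $\alpha$ when $\chr k \ne 2$ and $2\alpha$ when $\chr k = 2$, with coroots $2\sum_{\xi \in \Xi_\beta}\xi^\vee$ and $\sum_{\xi \in \Xi_\beta}\xi^\vee$ respectively, each matching Theorem \ref{thm:restriction} (now with $|\Gamma\cdot\beta|/|\Xi_\beta| = 2$). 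Together the two cases identify $\Phi(\downstairs G, \downstairs T)$ with $\downstairs\Phi'$ as in Remark \ref{rem:reduced-subsystem}, yielding $\Psi(\downstairs G, \downstairs T) = \downstairs\Psi'$.

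For the Weyl group: once $\Psi(\downstairs G, \downstairs T) = \downstairs\Psi'$ is established, $W(\downstairs G, \downstairs T)$ equals the Weyl group of $\downstairs\Psi'$, which coincides with that of $\downstairs\Psi$ (passing to a maximal reduced subsystem does not change the Weyl group), and by Corollary \ref{cor:weyl-isomorphism} this in turn equals $W(G,T)^\Gamma$. The main obstacle is the non-orthogonal case in characteristic two: a naive count of $\Gamma$-invariants in $\Lie(G)$ would suggest $\alpha$ always contributes to $\Phi(\downstairs G,\downstairs T)$, yet in characteristic two the scheme $G^\Gamma$ can fail to be smooth and the actual root turns out to be $2\alpha$. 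Handling this cleanly requires the explicit group-level computation of the connected fixed points of $SL_3$ (or $PGL_3$) under its diagram involution in characteristic two, done by hand so as to substitute for (and avoid appeal to) Steinberg's endomorphism classification.
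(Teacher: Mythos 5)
Your approach is genuinely different from the paper's and would, if completed, give a more self-contained argument. The paper reduces to the case of a cyclic group $\Gamma$ acting on a simple group (via central quotients, the product decomposition, the device from \cite{adler-lansky:lifting}*{Proposition 3.5} replacing $\Gamma$ by $S\times\Gamma_1$, and the solvability of the automorphism group of a connected Dynkin diagram), and then invokes Steinberg \cite{steinberg:endomorphisms}*{\S8.2(2$''''$)} as a black box to get $\downstairs\Phi' \subseteq \Phi(\downstairs G, \downstairs T)$; the reverse containment $\Phi(\downstairs G, \downstairs T)\subseteq\downstairs\Phi$ is \cite{adler-lansky:lifting}*{Proposition 3.5(iv)}, and maximal reducedness of $\downstairs\Phi'$ closes the argument. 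You instead analyze one $\Gamma$-orbit of roots at a time and propose to compute the connected fixed subgroup of each rank-one or rank-two block explicitly; this is an honest substitute for the Steinberg citation, and your derivation of the Weyl-group equality from the root-datum equality via Corollary \ref{cor:weyl-isomorphism} matches the paper.

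The gap is that the one computation that replaces Steinberg is asserted, not carried out. Determining that the connected fixed points of $SL_3$ (or $PGL_3$) under the pinning-preserving involution have restricted root $\alpha$ in characteristic $\ne 2$ but $2\alpha$ in characteristic $2$, with coroots as you describe, is exactly the nontrivial content the paper imports from \cite{steinberg:endomorphisms}; as you yourself note, the characteristic-$2$ case cannot be settled by a Lie-algebra count (since $\tau(X_{\alpha_1+\alpha_2}) = -X_{\alpha_1+\alpha_2}$, the weight space $\mathfrak g_{\alpha_1+\alpha_2}^\tau$ is nonzero precisely when $\chr k=2$, so $\Lie(G)^\Gamma$ strictly contains $\Lie(\downstairs G)$ and the group-level argument is unavoidable). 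Until that calculation is written out, the proposal is an outline rather than a proof. Two subsidiary points also need to be made precise: the orbit $\Xi_\beta$ attached to a non-orthogonal orbit $\Gamma\cdot\beta$ is itself a separate $\Gamma$-orbit of roots whose root groups already lie inside your $H_{\Gamma\cdot\beta}$, so the ``orbit by orbit'' analysis is not a decomposition into independent pieces and one must check that every root of $\downstairs G$ is accounted for; and passing from the $\Gamma$-action on a product of $A_2$-blocks to the diagram involution on a single block requires verifying, using the pinning-fixing hypothesis, that the stabilizer in $\Gamma$ of a block acts only through the diagram involution. None of this appears to be wrong, but the hard step is currently missing.
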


\begin{proof}
Since the Weyl group of $\Psi(\downstairs G, \downstairs T)$ is
$W(\downstairs G, \downstairs T)$,
and $W(\downstairs \Psi) = W(\downstairs\Psi')$,
and
Corollary \ref{cor:weyl-isomorphism}
implies that $W(\downstairs\Psi) = W(G,T)^\Gamma$,
the claim about the Weyl groups will follow from the claim about root data.

To prove this, we reduce to the well-known case where $\Gamma$ is cyclic.
It is clear from the constructions that to show that
$\downstairs \Psi' = \Psi(\downstairs G, \downstairs T)$,
it suffices to show $\downstairs \Phi' = \Phi(\downstairs G, \downstairs T)$.
This statement follows for $G$ if it
holds for a central quotient of $G$.
Therefore, we may assume that (over $k\sep$) $G$ is a direct product of
simple groups.
We can also reduce to the case where
$\Gamma$ acts transitively on the factors of $G$.
As in the proof of \cite{adler-lansky:lifting}*{Proposition~3.5},
we may identify the factors
of $G$ with each other, and replace $\Gamma$ by a group $S\times \Gamma_1$,
where $S$ acts by permuting the factors in our product decomposition of $G$, and
$\Gamma_1$ preserves each factor and acts in the same way on each,
such that
\begin{itemize}
\item
the action of $S\times \Gamma_1$ preserves $\{X_\alpha\}$,
\item
$\downstairs G = (G^{S\times \Gamma_1})\conn$,
\item
$\downstairs T = (T^{S\times \Gamma_1})\conn$, and
\item
$\downstairs\Phi$, hence $\downstairs\Phi'$,
does not change when we replace the action of $\Gamma$ by that of $S\times\Gamma_1$.
\end{itemize}

Working in stages, we may assume that $\Gamma$ is simple.
Thus, either $\Gamma$ acts by permutation of the factors of $G$,
or $G$ has a single factor and thus is simple.
In the former case, our result is trivial, so assume that $G$ is simple.
Then $G$ has a connected Dynkin diagram, whose automorphism group is solvable.
Since $\Gamma$ embeds in this automorphism group, $\Gamma$ must be solvable hence cyclic.
It is stated without proof in~\cite{kottwitz-shelstad:twisted-endoscopy}*{\S1.1}
that in the cyclic case, $\Phi(\downstairs G, \downstairs T) = \downstairs\Phi'$
(and $W(\downstairs G, \downstairs T) = W(G,T)^\Gamma$).
We include a proof below.

It follows from~\cite{steinberg:endomorphisms}*{\S8.2(2$''''$)}
that since $\Gamma$ fixes a pinning (i.e., for each simple root $\beta \in \Phi(G,T)$,
we have that $c_\beta = 1$ in the terminology~\loccit),
for each $\beta\in\Phi(G,T)$ such that $i^*\beta\in \downstairs\Phi'$,
$\beta_{\textrm{res}}$ belongs to $\Phi(\downstairs G, \downstairs T)$.
Since, by definition, $\downstairs\Phi' \subseteq i^*(\Phi(G,T))$,
and since $\beta_{\textrm{res}}$ corresponds to $i^*\beta$ under our identification
of $X^*(\downstairs T)$ with $\downstairs X^*$, this
shows that $\downstairs\Phi'\subseteq\Phi(\downstairs G,\downstairs T)$.
On the other hand, by~\cite{adler-lansky:lifting}*{Proposition 3.5(iv)}, every
root in $\Phi(\downstairs G,\downstairs T)$ is the restriction of a root in
$\Phi(G,T)$, so $\Phi(\downstairs G,\downstairs T)\subseteq i^*(\Phi(G,T)) = \downstairs\Phi$.
But $\downstairs\Phi'$ is a maximal reduced subsystem of $\downstairs\Phi$, so
$\Phi(\downstairs G,\downstairs T) = \downstairs\Phi'$, which concludes the proof.
\end{proof}

\begin{proof}[Proof of Theorem \ref{thm:compatibility}]
Consider a class in $\classT$.
From Remark \ref{rem:fixed-pinning},
we can represent this class by a triple $(G,T,\varphi)$, where
$G$ is $k$-quasisplit and the action $\varphi$ of $\Gamma$ on $G$
fixes a $\Gal(k)$-invariant pinning.
Let $\downstairs G = (G^{\varphi(\Gamma)})\conn$
and $\downstairs T = (T^{\varphi(\Gamma)})\conn$.
In order to prove our result, we must show that
the reduced root datum $\downstairs\Psi'$ contained in the restricted
root datum $\downstairs\Psi$ of Theorem \ref{thm:restriction}
is equivalent to $\Psi(\downstairs G, \downstairs T)$.
This is the content of
Lemma \ref{lem:weyl-fixed-pinning}.
\end{proof}

\section{Remarks on Cohomological Parametrizations}
\label{sec:cohomology}
In this section, we describe partitions of $\classR$ and $\classT$ into blocks, each of which
can be parametrized
cohomologically.

Let $\dot\Psi_0$ be a based root datum with $\Gal(k)\times \Gamma$ action, i.e., the
distinguished base is $\Gal(k)\times \Gamma$-stable.
Let $\Psi_0$ be the root datum underlying $\dot\Psi_0$.
Let $\Psi$
be a root datum with $\Gal(k)\times \Gamma$-action such that
\begin{itemize}
\item $\Gamma$ preserves a base of $\Psi$; and
\item
with respect to some (hence any) choice of $\Gamma$-invariant base of $\Psi$,
the associated based root datum $\dot\Psi$ (with action $\sigma\mapsto\sigma^\star$ as in
\eqref{eq:*-action})
is $\Gal(k)\times\Gamma$-isomorphic to
$\dot\Psi_0$.
\end{itemize}
The collection of such root data $\Psi$ is closed under
$\Gal(k)\times\Gamma$-isomorphism 
(the relation of equivalence from~\S\ref{sec:intro}),
and we let $\mathscr{R}_\Gamma^{\dot\Psi_0}$ denote
the set of equivalence classes of data with the above properties.

Let $\nu$ be a $\Gal(k)\times\Gamma$-isomorphism $\abmap{\dot\Psi_0}{\dot\Psi}$
implementing the above equivalence.
For $\sigma\in\Gal(k)$, let $\sigma(\nu)$ be the map $\sigma\circ\nu\circ\sigma\inv$,
viewed as an isomorphism $\abmap{\Psi_0}{\Psi}$.
In particular, the action of $\sigma$ on $\Psi$ here is via the original action of $\Gal(k)$,
not the action $\sigma\mapsto\sigma^\star$ on $\dot\Psi$.
Evidently, the map $\nu\inv\sigma(\nu)$ lies in the group $\Aut_\Gamma(\Psi_0)$ of
$\Gamma$-equivariant automorphisms of $\Psi_0$. 
It is not difficult to show that in fact, $\nu\inv\sigma(\nu)$ lies
in the subgroup $W(\Psi_0)^\Gamma$ of $\Aut_\Gamma(\Psi_0)$ and,
moreover, that $\sigma\mapsto\nu\inv\sigma(\nu)$ determines
a cocycle $c$ in $Z^1(k,W(\Psi_0)^\Gamma)$.

Altering the choice of $\Psi$ in its class or the choice of $\nu$ has the effect of
replacing $c$ by $\sigma\mapsto\kappa\inv \circ c(\sigma) \circ\sigma(\kappa)$ for some
$\kappa\in\Aut_\Gamma(\Psi_0)$. Hence the class of $\Psi$ determines an element of 
$$\classH^{\dot\Psi_0}:=\Image\bigl[ H^1(k,W(\Psi_0)^\Gamma)\longrightarrow H^1(k,\Aut_\Gamma(\Psi_0)\bigr].$$
It is readily seen that, conversely, an element of this image determines a root datum $\Psi$ as above,
unique up to $\Gal(k)\times\Gamma$-isomorphism. Thus we have defined a one-to-one correspondence
$\map{a_\Gamma}{\classR^{\dot\Psi_0}}{\classH^{\dot\Psi_0}}$.

We now describe a corresponding parametrization for blocks in $\classT$.
Define $\classT^{\dot\Psi_0}$ to be $s_\Gamma\big(\classR^{\dot\Psi_0}\big)$,
i.e., the set of classes in $\classT$ containing triples whose associated based
root data are $\Gal(k)\times\Gamma$-isomorphic to $\dot\Psi_0$.

Recall that the definition of $s_\Gamma$ in Theorem~\ref{thm:root-data-to-group}
involves the choice of a triple $(G,B_0,T_0)$ consisting of a $k$-quasisplit connected reductive group $G$,
a Borel $k$-subgroup $B_0$ of $G$,
and a maximal $k$-torus $T_0$ of $B_0$. We may identify
the based root data (with $\Gal(k)\times\Gamma$-actions) $\dot\Psi (G,B_0,T_0)$ and $\dot\Psi_0$.
We observe that the construction of the triple
$(G,T,\varphi)$ in the definition of $s_\Gamma$
involves passing from the given root datum $\Psi$ to a cocycle
$c$ in $Z^1(k,W(G,T_0)^{\varphi(\Gamma)}) = Z^1(k,W(\dot\Psi_0)^\Gamma)$.
It can be checked that the passage from $\Psi$ to $c$ is precisely the one
occurring in the above definition of the map $a_\Gamma$.

The proof of Theorem~\ref{thm:root-data-to-group}
proceeds to define $(G,T,\varphi)$ in such a way that it depends only on $c$, and not
directly on $\Psi$. Moreover, it follows from the proof that
the equivalence class of $(G,T,\varphi)$ depends only on the class of $c$
in $H^1(k,\Aut_\Gamma(\Psi_0))$. Therefore, we obtain a map
$\map{b_\Gamma}{\classH^{\dot\Psi_0}}{\classT^{\dot\Psi_0}}$ such
that $s_\Gamma$ factors through $\classH^{\dot\Psi_0}$ according to the
commutative diagram
$$
\begin{xy}
\xymatrix{
\classR^{\dot\Psi_0}
\ar@/^.3pc/[rr]^{\sect_\Gamma}
\ar[dr]^{a_\Gamma}
&&
\classT^{\dot\Psi_0}
\ar@/^.3pc/[ll]^{\proj_\Gamma}
\\
&\classH^{\dot\Psi_0}
\ar@/^.3pc/[ur]^{b_\Gamma}
}
\end{xy}
$$
In particular, since $a_\Gamma$ and $s_\Gamma$ are bijections, $b_\Gamma$ must also be a bijection.

\begin{bibdiv}
\begin{biblist}

\bib{adler-lansky:lifting}{article}{
  author={Adler, Jeffrey D.},
  author={Lansky, Joshua M.},
  title={Lifting representations of finite reductive groups I: Semisimple conjugacy classes},
  journal={Canad. J. Math.},
  volume={66},
  year={2014},
  pages={1201\ndash 1224},
  doi={10.4153/CJM-2014-013-6},
  eprint={arXiv:1106.0786},
}

\bib{adler-lansky:lifting2}{article}{
    author={Adler, Jeffrey D.},
    author={Lansky, Joshua M.},
     title={Lifting representations of finite reductive groups II: Explicit conorm functions},
    eprint={arxiv:1109.0794},
}

\bib{demazure:sga3-24}{article}{
  author={Demazure, M.},
  title={Automorphismes des groupes r\'eductifs},
  language={French},
  conference={ title={Sch\'emas en Groupes}, address={S\'em. G\'eom\'etrie Alg\'ebrique, Inst. Hautes \'Etudes Sci.}, date={1963/64}, },
  book={ publisher={Inst. Hautes \'Etudes Sci., Paris}, },
  date={1965},
  pages={87},
  review={\MR {0228503 (37 \#4083)}},
}

\bib{haines:satake}{article}{
  author={Haines, Thomas J.},
  title={On Satake parameters for representations with parahoric fixed vectors},
  journal={Int. Math. Res. Not. IMRN},
  date={2015},
  number={20},
  pages={10367--10398},
  issn={1073-7928},
  review={\MR {3455870}},
  eprint={arXiv:1402.3812},
}

\bib{haines:dualities}{article}{
   author={Haines, Thomas J.},
   title={Dualities for root systems with automorphisms and applications to
   non-split groups},
   journal={Represent. Theory},
   volume={22},
   date={2018},
   pages={1--26},
   issn={1088-4165},
   review={\MR{3772644}},
   doi={10.1090/ert/512},
   eprint={arXiv:1604.01468},
}

\bib{kottwitz-shelstad:twisted-endoscopy}{article}{
  author={Kottwitz, Robert E.},
  author={Shelstad, Diana},
  title={Foundations of twisted endoscopy},
  language={English, with English and French summaries},
  journal={Ast\'erisque},
  number={255},
  date={1999},
  pages={vi+190},
  issn={0303-1179},
  review={\MR {1687096 (2000k:22024)}},
}

\bib{raghunathan:tori}{article}{
  author={Raghunathan, M.~S.},
  title={Tori in quasi-split groups},
  journal={J.~Ramanujan Math.~Soc.},
  volume={19},
  date={2004},
  number={4},
  pages={281\ndash 287},
  issn={0970-1249},
  review={\MR {MR2125504 (2005m:20114)}},
}

\bib*{proc:corvallis1}{collection}{
  title={Automorphic forms, representations, and $L$-functions. Part 1},
  series={Proceedings of Symposia in Pure Mathematics, XXXIII},
  booktitle={Proceedings of the Symposium in Pure Mathematics of the American Mathematical Society (Twenty-fifth Summer Research Institute) held at Oregon State University, Corvallis, Ore., July 11--August 5, 1977},
  editor={Borel, Armand},
  editor={Casselman, W.},
  publisher={American Mathematical Society},
  place={Providence, R.I.},
  date={1979},
  pages={x+322},
  isbn={0-8218-1435-4},
  review={\MR {546586 (80g:10002a)}},
}

\bib{springer:corvallis}{article}{
  author={Springer, Tonny A.},
  title={Reductive groups},
  book={proc:corvallis1},
  pages={3\ndash 27},
  review={\MR {546587 (80h:20062)}},
}

\bib{springer:lag}{book}{
  author={Springer, Tonny A.},
  title={Linear algebraic groups},
  series={Progress in Mathematics},
  volume={9},
  publisher={Birkh\"auser Boston Inc.},
  place={Boston, MA},
  date={1998},
  pages={xiv+334},
  isbn={0-8176-4021-5},
  review={\MR {1642713 (99h:20075)}},
}

\bib{springer:lag-article}{article}{
  author={Springer, Tonny A.},
  title={Linear algebraic groups},
  conference={ title={Algebraic geometry IV}, },
  book={ series={Encyclopedia of Mathematical Sciences}, publisher={Springer—Verlag}, },
  date={1994},
  pages={1\ndash 121},
  review={\MR {1100484 (92g:20061)}},
}

\bib{steinberg:endomorphisms}{book}{
  author={Steinberg, Robert},
  title={Endomorphisms of linear algebraic groups},
  series={Memoirs of the American Mathematical Society, No. 80},
  publisher={American Mathematical Society},
  place={Providence, R.I.},
  date={1968},
  pages={108},
  review={\MR {0230728 (37 \#6288)}},
}

\end{biblist}
\end{bibdiv}
\end{document}